\providecommand{\XLeftMargin}{2.5cm}
\providecommand{\XTopMargin}{2.5cm}
\providecommand{\XRightMargin}{2.5cm}
\providecommand{\XBottomMargin}{2.5cm}
\providecommand{\XLeftMargin}{2cm}
\providecommand{\XTopMargin}{1.8cm}
\providecommand{\XRightMargin}{2cm}
\providecommand{\XBottomMargin}{1.8cm}
\newlength\XXXMyLength\makeatletter
\def\Xadjustleft[#1]{\setlength\XXXMyLength{#1}\ifnum\numexpr\leftmargin>\numexpr\XXXMyLength\hspace{-\XXXMyLength}\else\hspace{-\leftmargin}\fi}
\theoremstyle{plain}
\newtheorem{thm}{Theorem}[section]
\newtheorem*{thm*}{Theorem}
\newtheorem{lemma}[thm]{Lemma}
\newtheorem*{lem*}{Lemma}
\newtheorem{prop}[thm]{Proposition}
\newtheorem*{prop*}{Proposition}
\newtheorem*{cor*}{Corollary}
\newtheorem*{conv}{Convention}
\newtheorem*{conj*}{Conjecture}
\theoremstyle{definition}
\newtheorem*{cons*}{Construction}
\newtheorem*{df*}{Definition}
\newtheorem{nota}[thm]{Notation}
\newtheorem*{nota*}{Notation}
\newtheorem{qu}{Question}
\newtheorem*{qu*}{Question}
\newtheorem{rmk}[thm]{Remark}
\newtheorem*{rmk*}{Remark}
\newtheorem*{ex*}{Example}
\newcommand{\bB}{\mathbb{B}}
\newcommand{\bC}{\mathbb{C}}
\newcommand{\bF}{\mathbb{F}}
\newcommand{\bQ}{\mathbb{Q}}
\newcommand{\bZ}{\mathbb{Z}}
\newcommand{\cO}{\mathcal{O}}
\newcommand{\fa}{\mathfrak a}
\newcommand{\fb}{\mathfrak b}
\newcommand{\ff}{\mathfrak f}
\newcommand{\fp}{\mathfrak p}
\DeclareMathOperator{\End}{End}
\DeclareMathOperator{\Frob}{Frob}
\DeclareMathOperator{\Gal}{Gal}
\DeclareMathOperator{\Tr}{Tr}
\def\sumprime{\mathop{\sum{\raise3pt\hbox{${}'$}}}}
\def\revddots{\mathinner{\mkern1mu\raise\p@
\vbox{\kern7\p@\hbox{.}}\mkern2mu
\raise4\p@\hbox{.}\mkern2mu\raise7\p@\hbox{.}\mkern1mu}}
\newcommand{\floor}[1]{\left\lfloor #1 \right\rfloor}
\providecommand{\abs}[1]{\left\vert #1 \right\vert}
\newcommand{\comment}[1]{}
\begin{document}

\title[$j$-Invariants of CM-Elliptic Curves Over $\bZ_p$]{On The $j$-Invariants of CM-Elliptic Curves Defined Over $\bZ_p$}
\author{Andrew Fiori}

\address{Mathematics \& Statistics
612 Campus Place N.W.
University of Calgary
2500 University Drive NW
Calgary, AB, Canada
T2N 1N4}

\begin{abstract}
We characterize the possible reductions modulo $p$ of the $j$-invariants of supersingular elliptic curves which admit complex multiplication by a (potentially non-maximal) order $\cO$ where the curve itself is defined over $\bZ_p$. In particular, we show that the collection of possible $j$-invariants as well as some aspects of the distribution depends on which primes divide the discriminant and conductor of the order $\cO$.
\end{abstract}

\keywords{Complex Multiplication, Lifting, Elliptic Curves }

\subjclass[2010]{Primary 11G07; Secondary 11G15}

\maketitle

\section{Introduction}

There are several different ways of framing the results of this paper.
Our main object of study will be CM-elliptic curves over $\bZ_p$ which are supersingular at $p$.
The results we obtain will primarily be directed towards trying to address the following three questions:
\begin{enumerate}
\item When are there elliptic curves defined over $\bZ_p$, which (after base extension) admit CM by an order $\cO$ in a quadratic imaginary field $K$ in which $p$ is inert and where $p$ does not divide the conductor of $\cO$?
\item For such curves, what factors affect the possible reductions of their $j$-invariants modulo $p$ amongst the set of all supersingular $\bF_p$-rational $j$-invariants?
\item Given an $\bF_p$-rational supersingular $j$-invariant which admits CM by $\cO$, when does there exist an elliptic curve defined over $\bZ_p$, which (after base extension) admits CM by $\cO$, which reduces to it.
\end{enumerate}
Though they are not necissarily framed in this way, related questions are treated in \cite{StankewiczTwistsofShimura}, \cite{MortonGenus} and  \cite{BrillMorClassNumbers} and some of our results can naturally be viewed as generalizations to the context of non-maximal orders.
Furthermore, there are natural connections between some of our results and those presented in \cite{lauter2015arithmetic}.

We note one natural source of interest in these questions is the following observation of Ernst Kani:
\begin{prop*}
Suppose $p$ is unramified in $K$ and does not divide the conductor of $\cO$. Then every $\bF_p$ elliptic curve which admits CM by $\cO$ lifts to $\bZ_p$ (with a lifting of its CM to $\overline{\bZ}_p$) if and only if $p$ does not divide the conductor of the ring $\bZ[j(E_1),\ldots, j(E_n)]$ generated by the $j$ invariants of all elliptic curves which admit CM by $\cO$.
\end{prop*}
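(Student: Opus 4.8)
The plan is to deduce the proposition from Deuring's theory of reduction and lifting of CM elliptic curves, the substance being a translation of the lifting problem into a question about the splitting of $p$ in the order $R:=\bZ[j(E_1),\dots,j(E_n)]$. One first disposes of the case where $p$ splits or ramifies in $K$: every $\bF_p$-elliptic curve with CM by $\cO$ is then ordinary, Deuring's canonical lift provides a lift over $\bZ_p$ carrying the CM (whose action is then defined over $\overline{\bZ}_p$), while on the other side $p\nmid\mathrm{cond}(R)$ follows because the reductions of the $j(E_i)$ are pairwise distinct --- two characteristic-zero CM curves with $\End\cong\cO$ reducing to a common ordinary curve would each be its canonical lift --- so $H_\cO(X)=\prod_i\bigl(X-j(E_i)\bigr)$ is separable modulo $p$ and $R$ is maximal at $p$. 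Thus we may assume $p$ is inert in $K$, so the curves in question are supersingular.

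Fixing an embedding $\overline{\bQ}\hookrightarrow\overline{\bQ}_p$ and reducing, Deuring's theory identifies the supersingular $j$-invariants over $\overline{\bF}_p$ admitting CM by $\cO$ with the reductions $\overline{j(E_i)}$, and shows that each pair $(\bar E,\bar\iota)$ --- a supersingular curve together with $\bar\iota\colon\cO\hookrightarrow\End(\bar E)$ --- is the reduction of a characteristic-zero pair $(E_i,\iota_i)$ with $\End(E_i)\cong\cO$. Since twists of a lift are again lifts, and since the CM of a characteristic-zero CM curve with good reduction over $\bZ_p$ extends automatically over $\overline{\bZ}_p$, the assertion ``$(\bar E,\bar\iota)$ lifts over $\bZ_p$'' becomes: the model $(E_i,\iota_i)$ can be $\Gal(\overline{\bQ}_p/\bQ_p)$-conjugated so that $E_i$ is defined over $\bZ_p$ --- equivalently, $j(E_i)$ lies in $\bZ_p$ --- and reduces to $(\bar E,\bar\iota)$; here one needs the usual care with automorphisms when $j=0$ or $1728$. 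As an $\bF_p$-elliptic curve has $j$-invariant in $\bF_p$, the proposition reduces to: every supersingular CM-by-$\cO$ pair over $\overline{\bF}_p$ supported on an $\bF_p$-rational $j$-invariant lifts over $\bZ_p$ if and only if $p\nmid\mathrm{cond}(R)$.

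This equivalence is the heart of the matter and the step I expect to be the main obstacle. As $p$ is inert in $K$ and prime to $\mathrm{cond}(\cO)$, $p$ is unramified in the fraction field $L$ of $R$, so $p\nmid\mathrm{cond}(R)$ simply says that $R$ is maximal at $p$. Writing $R\otimes_{\bZ}\bZ_p=\prod_{\fp\mid p}R_{\fp}$, a pair supported on $\overline{j(E_i)}$ lifts over $\bZ_p$ precisely when $j(E_i)$ is seen through a component $R_{\fp}$ equal to $\bZ_p$ (unramified, residue degree one), and one must match the multiset of such components --- taking into account the Frobenius $\Frob_p\in\Gal(L/\bQ)$, which has order dividing $2$ because $p$ is inert in $K$ --- against the set of all $\bF_p$-rational supersingular CM-by-$\cO$ pairs. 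The forward direction is comparatively soft: if $R$ is maximal at $p$ one extracts, via Hensel's lemma applied to the residue-degree-one unramified factors of $H_\cO$ over $\bZ_p$, enough $\bZ_p$-rational roots with pairwise distinct reductions to account for all such pairs. The hard direction is that a single non-liftable pair forces $p\mid\mathrm{cond}(R)$; for this I would invoke the Serre--Tate/Lubin--Tate deformation space of the supersingular curve, on which the locus where the CM deforms is finite flat over $\bZ_p$ and the obstruction to a $\bZ_p$-point is detected $p$-integrally by $H_\cO$, so that a pair that fails to lift yields a $p$-adic degeneracy of $H_\cO$ --- a coincidence among the $\overline{j(E_i)}$ --- that is incompatible with $R$ being maximal at $p$. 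In practice I would carry the argument out on $R\otimes\bZ_p$ prime by prime over $p$, Hensel's lemma handling the unramified residue-degree-one factors and a Newton-polygon analysis of $H_\cO$ over $\bZ_p$ handling the rest.
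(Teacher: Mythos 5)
A preliminary remark: the paper states this proposition in its introduction as an observation of Ernst Kani and supplies no proof, so there is nothing in the text to compare your argument against; I can only assess it on its own terms. Your skeleton is the right one --- translate ``$\bar E$ lifts to $\bZ_p$'' into ``some root $j_i$ of $P_{\cO}$ lies in $\bZ_p$ and reduces to $j(\bar E)$'' (twists being harmless), and translate ``$p$ does not divide the conductor of $R=\bZ[j(E_1),\dots,j(E_n)]$'' into maximality of $R\otimes\bZ_p$ --- and it is consistent with the machinery the paper actually deploys (linear factors of $P_{\cO}$ over $\bZ_p$ counted via the order-two Frobenius in the generalized dihedral group $\Gal(L/\bQ)$). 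But the step you yourself identify as the heart of the matter is not carried out, and the gap is genuine in both directions. For the ``soft'' direction, Hensel's lemma applied to the residue-degree-one factors does not suffice: an $\bF_p$-rational root of $\overline{P_{\cO}}$ can arise as the reduction of a root sitting inside an irreducible quadratic factor of $P_{\cO}$ over $\bZ_p$ whose reduction degenerates modulo $p$ (splits over $\bF_p$ or acquires a double root), and Hensel says nothing there. What you actually need is that maximality of $R\otimes\bZ_p$ forces reduction modulo a prime $\fP$ above $p$ to be injective on the set $\{j_1,\dots,j_n\}$ and to intertwine $\Frob_\fP$ with $x\mapsto x^p$, so that $\bar j_i\in\bF_p$ forces $\Frob_\fP(j_i)=j_i$, i.e.\ $j_i\in\bZ_p$. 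That injectivity is precisely where the conductor of $R$ (as opposed to, say, the discriminant of $P_{\cO}$) enters, and it must be proved, not presupposed.

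For the converse, the appeal to Serre--Tate/Lubin--Tate deformation spaces and Newton polygons is a gesture, not an argument; the statement to be proved is again purely one about the order $R$. Moreover there is a concrete obstruction your sketch does not confront: $p$ can divide the conductor of $R$ because of a congruence $j_i\equiv j_k\pmod{\fP}$ between two roots that are \emph{both} already $\Frob$-fixed (both curves then lift, and no lifting failure is visible at that collision), or between two roots lying in degree-two components whose common reduction is irrational over $\bF_p$ (so no $\bF_p$-curve is involved at all). To conclude ``some $\bF_p$-curve fails to lift'' you must show that every source of non-maximality of $R$ at $p$ --- a residue field not generated by the reductions $\bar j_i$, or two primes of $\bQ(j_1,\dots,j_n)$ above $p$ carrying identical reduction data --- is necessarily accompanied by an $\bF_p$-rational supersingular value in the image of the CM locus that is not the reduction of any $\Frob$-fixed root; nothing in the proposal addresses this, and it is the real content of the ``only if''. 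Finally, a smaller but genuine error: if $p$ ramifies in $K$ the reduction of a CM curve is supersingular, not ordinary, so the canonical-lift disposal of the ``splits or ramifies'' case fails for the ramified half; you would need either to treat $p\mid D\ff$ separately or to restrict, as the paper does throughout, to $\left(\frac{-D\ff^2}{p}\right)=-1$.
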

\begin{rmk}
This ring $\bZ[j(E_1),\ldots, j(E_n)]$ is a natural order in the ring class field of $K$ associated to $\cO$, its structure is mysterious.
\end{rmk}

The results we will describe are in contrast to what one would obtain for the same questions asked for elliptic curves over $\bZ_{p^2}$, the unramified quadratic extension of $\bZ_p$.
In particular, over $\bZ_{p^2}$, we have the following answers:
\begin{enumerate}
\item There are always CM-elliptic curves over $\bZ_{p^2}$ which admit CM by $\cO$ an order in a quadratic imaginary field $K$ in which $p$ is inert, and where $p$ does not divide the conductor.
\item From the work of Cornut-Vatsal \cite{CornutVatsal1,CornutVatsal2} and Jetchev-Kane \cite{JetchevKane} we have that the reductions of the $j$-invariants of elliptic curves with CM by $\cO$ are equidistributed among the supersingular values in $\bF_{p^2}$ (as we vary the conductors $\cO$ subject to certain congruence conditions). Moreover, for each $p$ and all but finitely many $\cO$ where $p$ is inert, the map from elliptic curves with CM by $\cO$ to supersingular $j$-invariants in $\bF_{p^2}$ is surjective.
\item By the work of Deuring \cite{Deuring} we know that given a supersingular elliptic curve $\overline{E}$ with CM by $\cO$ there always exists a lift to an elliptic curve over $\bZ_{p^2}$ with CM by $\cO$ which reduces to $\overline{E}$ (along with its CM).
\end{enumerate}

The results we obtain are motivated by computations,
some of the data from which is presented in the Appendix, 
which gave results which seemed contrary to the above. In particular if we consider only the elliptic curves which are defined over $\bZ_p$ then:
\begin{itemize}
\item 
They are not always surjective onto supersingular $\bF_p$ values as we vary $\cO$ among 
\begin{itemize} 
\item maximal orders subject to certain congruence conditions on the discriminant;
\item orders in a certain fixed $K$ subject to certain congruence conditions on the conductor;
\item orders subject to certain congruence conditions on the conductor and discriminant of $K$.
\end{itemize}
\item
The set of possible values, and hence the overall distributions depends on congruence conditions on both the discriminant of $K$ and the conductor of $\cO$.
\item
For certain congruence conditions on discriminants and conductors there are irreducible factors which always appear together, in equal numbers. So the appearance of a given factor is not independent on the appearance of another.
\end{itemize}
We should emphasize before proceeding that though perhaps unexpected in light of them, the above does not actually conflict with the aforementioned equidistribution results. 

This paper is organized as follows:
\begin{itemize}
\item In Section \ref{sec:bg} we introduce the relevant background.
\item In Section \ref{sec:res} we state and prove our results.
\item In Section \ref{sec:cq} we discuss two natural questions our work leaves open.
\item In the Appendix we discuss the computations and data on which are work is based.
\end{itemize}

\section{Background}\label{sec:bg}

In this section we will be introducing the results necessary to state and prove our theorems. 
Much of what we are saying is very well known, and can be found in many references on the theory of complex multiplication.
Some results which are perhaps less well known can be found in \cite{SchertzCM}, \cite{Deuring}, \cite{Ibukiyama}, \cite{Dorman1989global} or \cite{lauter2015arithmetic}.

\begin{conv}
Throughout this paper whenever we write $End(E)$ we shall mean the endomorphism algebra of $E$ over an algebraically closed field containing the ring of definition of $E$.
\end{conv}

We recall the following important facts:
\begin{thm}
If $E$ is an elliptic curve over a field of characteristic $0$ then either:
\begin{itemize}
\item $End(E) = \bZ$, this is the general case.
\item $End(E) = \cO$, for $\cO \subset \bQ(\sqrt{-D})$ an order in a quadratic imaginary field, this is the so-called CM-case.
\end{itemize}
\end{thm}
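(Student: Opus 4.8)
\emph{Plan.} I would reduce everything to the classical complex‑analytic description of $E$ and then run an elementary lattice computation.

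\emph{Step 1: reduction to $\bC$.} Both the statement and the formation of $\End(E)$ are insensitive to the base field in characteristic $0$: passing from $k$ to $\overline{k}$ can only enlarge $\End(E)$, and a unital subring of $\bZ$ equals $\bZ$ while a unital subring of an order in $\bQ(\sqrt{-D})$ is again $\bZ$ or an order, so it suffices to treat $E_{\overline k}$. Now $\End(E_{\overline k})$ is finitely generated as a $\bZ$‑module (it injects into $\End(T_\ell E)\cong \Mat_2(\bZ_\ell)$ via its action on an $\ell$‑adic Tate module, for any $\ell$), hence all of it is defined over a finitely generated subextension $k_0/\bQ$ of $\overline k$, and any such $k_0$ admits an embedding into $\bC$. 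Thus we may assume $k=\bC$.

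\emph{Step 2: the lattice stabilizer.} Over $\bC$ the uniformization theorem gives an analytic isomorphism $E(\bC)\cong \bC/\Lambda$ for a lattice $\Lambda\subset\bC$, and every endomorphism of $E$, being in particular a holomorphic group homomorphism fixing $0$, is multiplication by some $\alpha\in\bC$ with $\alpha\Lambda\subseteq\Lambda$; conversely such multiplications are algebraic. Hence $\End(E)\cong R:=\{\alpha\in\bC:\alpha\Lambda\subseteq\Lambda\}$, and in particular $\End(E)$ is commutative, which disposes of the ``division algebra'' possibilities for free. Write $\Lambda=\bZ\omega_1+\bZ\omega_2$ with $\tau:=\omega_1/\omega_2\notin\bR$ and rescale so that $\Lambda=\bZ\tau+\bZ$. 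For $\alpha\in R$ write $\alpha\cdot 1=b+a\tau$ and $\alpha\cdot\tau=d+c\tau$ with $a,b,c,d\in\bZ$; eliminating $\tau$ shows $\alpha$ is a root of $X^2-(a+d)X+(ad-bc)\in\bZ[X]$, so every element of $R$ is an algebraic integer of degree at most $2$ over $\bQ$.

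\emph{Step 3: conclusion.} If $R=\bZ$ we are in the first case. Otherwise pick $\alpha\in R\setminus\bZ$; with the notation above, $a=0$ would force $\alpha=b\in\bZ$, so $a\ne 0$, whence $\alpha=b+a\tau\notin\bR$ and $K:=\bQ(\alpha)$ is a quadratic imaginary field containing $\tau=(\alpha-b)/a$. Then $\Lambda\otimes_\bZ\bQ=\bQ+\bQ\tau=K$, and every $\beta\in R$ acts $\bQ$‑linearly on $K$ by multiplication with $\beta\cdot 1=\beta$, so $R\subseteq K$; together with Step 2 this gives $R\subseteq\cO_K$. Finally $R$ is a subring of $\cO_K$ containing the rank‑$2$ subring $\bZ[\alpha]$, hence a finite‑index subgroup of $\cO_K$ stable under multiplication, i.e. an order in $K$ — the CM case.

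\emph{Main obstacle.} The only real content beyond bookkeeping is the identification in Step 2 of \emph{algebraic} endomorphisms of $E$ with the lattice stabilizer $R$: that a holomorphic endomorphism of $\bC/\Lambda$ is automatically a morphism of algebraic curves and conversely. For elliptic curves this is classical and can be checked by hand using the Weierstrass $\wp$‑function together with its algebraic addition theorem, so even this step is routine; everything else is linear algebra over $\bZ$.
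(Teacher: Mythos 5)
Your proof is correct, and it is the standard classical argument (reduce to $\bC$ by the Lefschetz principle, identify $\End(E)$ with the stabilizer $\{\alpha:\alpha\Lambda\subseteq\Lambda\}$ of the period lattice, observe every such $\alpha$ is a quadratic algebraic integer, and conclude that the stabilizer is $\bZ$ or an order in an imaginary quadratic field). The paper does not prove this statement at all: it is recalled as well-known background, with the surrounding discussion pointing to standard references on complex multiplication, so there is no argument in the paper to compare yours against. One cosmetic slip: with your conventions $\alpha\cdot 1=b+a\tau$, $\alpha\cdot\tau=d+c\tau$, eliminating $\tau$ gives $\alpha^2-(b+c)\alpha+(bc-ad)=0$, not $X^2-(a+d)X+(ad-bc)$; the labels of trace and determinant are swapped, but the conclusion that $\alpha$ satisfies a monic integral quadratic is unaffected, so this is a typo rather than a gap.
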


\begin{conv}
We shall say an elliptic curve $E$ admits CM by $\cO$ if $\End(E) \simeq \cO$. To say that $E$ admits CM does not require that we have chosen a particular isomorphism of $\cO$ with $\End(E)$.
\end{conv}

We will be interested in the CM or complex multiplication case in characteristic $0$, where we have the following classification result:
\begin{thm}
The elliptic curve $E_\tau = \bC /  (\bZ \oplus \tau\bZ)$ has $End(E) \simeq \cO$ if and only if 
\begin{enumerate}
\item $\tau \in \bQ(\sqrt{-D})$, that is $\tau$ generates a (complex) quadratic field, and
\item $\bZ + \tau\bZ \subset \bQ(\sqrt{-D})$  is a (projective) $\cO$-module.
\end{enumerate}
Moreover, for any algebraically closed field $C$ of characteristic $0$ the collection of elliptic curves which admit CM by $\cO$ is a principal homogeneous space under the action of $C\ell(\cO)$ the ideal class group of $\cO$.
\end{thm}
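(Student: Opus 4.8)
The plan is to combine the classical analytic description of endomorphisms of a complex torus with the structure theory of ideals of orders. Recall that for a lattice $\Lambda\subset\bC$ one has a canonical identification $\End(\bC/\Lambda)\cong\{\alpha\in\bC:\alpha\Lambda\subseteq\Lambda\}$, the \emph{multiplier ring} of $\Lambda$, a complex number $\alpha$ acting as $z\mapsto\alpha z$; this is a standard consequence of the fact that every holomorphic endomorphism fixing the origin lifts to a $\bC$-linear endomorphism of $\bC$. I take this as known.

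First I would prove the ``if and only if'' for $E_\tau$, writing $\Lambda=\bZ\oplus\tau\bZ$ (and noting $\tau\notin\bR$, so $\Lambda$ is a genuine lattice). For the forward direction, if $\End(E_\tau)=\cO$ then, since an order in a quadratic imaginary field is strictly larger than $\bZ$, there is $\alpha\in\cO\setminus\bZ$ with $\alpha\Lambda\subseteq\Lambda$; writing $\alpha=m+n\tau$ and $\alpha\tau=p+q\tau$ and eliminating $\alpha$ yields $n\tau^2+(m-q)\tau-p=0$ with $n\neq 0$ (precisely because $\alpha\notin\bZ$), so $\tau$ generates a quadratic imaginary field $K=\bQ(\sqrt{-D})$, which is (1). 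Every multiplier $m+n\tau$ then lies in $K$, so $\End(E_\tau)=\{\alpha\in K:\alpha\Lambda\subseteq\Lambda\}$ is the multiplier ring of the rank‑two $\bZ$-lattice $\Lambda\subset K$; since this ring is $\cO$, the lattice $\Lambda$ is a proper fractional $\cO$-ideal, hence an invertible, locally principal, and therefore projective $\cO$-module by the standard structure theory of orders — this is (2), and here the non-maximality of $\cO$ is exactly what makes ``projective'' a real condition. Conversely, if (1) holds then $\End(E_\tau)$ is the multiplier ring of $\Lambda\subset K$ inside $K$, and if moreover (2) holds, $\Lambda$ is a projective $\cO$-module, hence invertible, hence proper, so its multiplier ring is exactly $\cO$ and $\End(E_\tau)=\cO$.

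For the ``moreover'' I would use the uniformization theorem: every elliptic curve over $\bC$ is $\bC/\Lambda_0$, the lattice unique up to homothety, with isomorphisms of curves corresponding to homotheties of lattices; rescaling gives $\Lambda_0=\bZ\oplus\tau\bZ$. The argument above shows $\End=\cO$ forces $\tau\in K$ and (after a further $K^\times$-rescaling) $\Lambda_0$ a proper fractional $\cO$-ideal, and each such ideal conversely gives such a curve; moreover a $\bC^\times$-homothety between two subsets of $K$ is automatically a $K^\times$-homothety (compute the ratio on a nonzero element). Hence isomorphism classes of elliptic curves over $\bC$ with $\End=\cO$ correspond to proper fractional $\cO$-ideals modulo principal fractional ideals, which is $C\ell(\cO)$ by definition. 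Finally, to replace $\bC$ by an arbitrary algebraically closed field $C$ of characteristic $0$: CM $j$-invariants are algebraic, so every curve with CM by $\cO$ over $C$ is defined over $\overline{\bQ}$, and over an algebraically closed field the isomorphism class is pinned down by $j$; fixing embeddings $\overline{\bQ}\injects\bC$ and $\overline{\bQ}\injects C$ identifies the finite set of $j$-invariants with $\End=\cO$ on the two sides, so the count over $C$ matches that over $\bC$ (alternatively, invoke the Lefschetz principle). The main obstacle is the order-theoretic input hidden in the word ``projective'': the real content is that $\End(E_\tau)=\cO$ detects not just that $\Lambda$ is an $\cO$-module but that its \emph{full} multiplier ring is $\cO$, and turning ``multiplier ring $=\cO$'' into ``projective/invertible/proper'' is exactly the classical theory of ideals of non-maximal orders; the analytic description of $\End$, the quadratic relation forcing $\tau$ imaginary quadratic, the class-group bookkeeping, and the descent to $\overline{\bQ}$ are all routine.
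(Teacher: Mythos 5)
The paper does not actually prove this theorem: it is stated as well-known background, with the reader pointed to standard references such as \cite{SchertzCM} and \cite{Deuring}, so there is no in-paper argument to compare against. Your proposal is the standard classical proof (multiplier ring of a lattice, the quadratic relation forcing $\tau$ imaginary quadratic, proper $=$ invertible $=$ projective for fractional ideals of a quadratic order, homothety classes giving $C\ell(\cO)$, and descent to $\overline{\bQ}$), and it is correct. Two small points worth making explicit if you write it up: the equivalence ``proper $\Leftrightarrow$ invertible'' is special to orders in quadratic fields (it fails for general orders in higher-degree fields), so you should say you are using the quadratic case; and the algebraicity of CM $j$-invariants, which you invoke to pass from $\bC$ to a general algebraically closed $C$, is most cleanly deduced from the finiteness of the set of homothety classes you have just established over $\bC$ (otherwise the step risks circularity), together with the fact that $\End(E)$ does not grow under extension of algebraically closed fields of characteristic $0$.
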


\begin{rmk}
Note that the collection of elliptic curves $E$ which admit CM by $\cO$ and the collection of pairs $(E,\rho:\cO\overset{\sim}\rightarrow \End(E))$ of $E$ and an isomorphism of $\cO$ with $\End(E)$ of a fixed CM-type are in bijection. In most contexts this later moduli problem is more natural. However, this moduli space never has $\bZ_p$-points. As we are primarily concerned with the field of definition of $E$ and not the field over which CM is obtained we shall be considering instead the ``moduli" of elliptic curves which admit CM by $\cO$.
\end{rmk}

\begin{thm}
If $E$ is an elliptic curve over a field of characteristic $p$ then either:
\begin{itemize}
\item $End(E) = \bZ$, this is the general case.
\item $End(E) = \cO$, for $\cO \subset \bQ(\sqrt{-D})$ an order in a quadratic imaginary field in which $p$ splits.
\item $End(E) = \bB$, for $\bB$ a maximal order in a quaternion algebra over $\bQ$ ramified only at $p$ and $\infty$. This is the so-called supersingular case.
\end{itemize}
\end{thm}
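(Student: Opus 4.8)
The plan is to first determine the $\bQ$-algebra $\End^0(E) := \End(E)\otimes_\bZ\bQ$ and then recover the integral structure; one should read the statement as describing $\End(E_{\bar k})$ (over a non-closed $k$ the endomorphism ring may be a proper suborder), so I assume $k=\bar k$. Three standard facts do most of the work. First, $\End(E)$ is torsion-free and finitely generated over $\bZ$, and for each prime $\ell\neq p$ the action on the Tate module is an injection $\End(E)\otimes\bZ_\ell\hookrightarrow\End_{\bZ_\ell}(T_\ell E)\cong M_2(\bZ_\ell)$; in particular $\rk_\bZ\End(E)\le 4$. Second, $E$ is a simple abelian variety, so every nonzero isogeny is a unit in $\End^0(E)$ and $\End^0(E)$ is a division algebra over $\bQ$. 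Third, the dual-isogeny map $\phi\mapsto\hat\phi$ is an anti-involution of $\End^0(E)$ for which $\phi\mapsto\phi\hat\phi=\deg\phi$ is a positive-definite quadratic form, i.e.\ a positive (Rosati) involution.

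I would then invoke Albert's classification of division algebras with positive involution --- or argue directly: a commutative subfield of $M_2(\bQ_\ell)$ has degree at most $2$, so $\dim_\bQ\End^0(E)$ cannot be $3$ (a cubic field admits no nontrivial involution, and the trivial involution produces the non-definite form $\phi\mapsto\phi^2$), and it cannot exceed $4$ by the first fact. This leaves dimension $1$, $2$ or $4$, giving respectively $\End^0(E)=\bQ$ (so $\End(E)=\bZ$); or a quadratic field $K$ whose nontrivial automorphism is the positive involution, which forces $K$ imaginary (positivity of $N_{K/\bQ}$) and $\End(E)$ an order $\cO$ in $K$; or a quaternion division algebra over $\bQ$, which the positive involution forces to be ramified at $\infty$ and the first fact forces to be split at every $\ell\neq p$, so that its ramification set --- finite, nonempty, of even cardinality --- is exactly $\{p,\infty\}$. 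For the integral assertions I would check that $\End(E)$ is maximal locally at every prime: at $\ell\neq p$ the displayed injection is in fact an isomorphism onto a maximal order, and at $p$ one has $\End(E)\otimes\bZ_p\cong\End(E[p^\infty])$; hence in the quaternion case $\End(E)=\bB$ is a maximal order.

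To decide which case occurs, and that $p$ splits in the quadratic case, I would analyse the $p$-divisible group $E[p^\infty]$; this is the step where characteristic $p$ genuinely differs from characteristic $0$. In characteristic $0$ the faithful action of $\End^0(E)$ on the one-dimensional tangent space $\Lie(E_{\bar k})$ already forces commutativity and kills the quaternion case, but in characteristic $p$ that action can fail to be faithful. Now $E[p^\infty]$ has height $2$, so over $\bar k$ it is either ordinary, isogenous to $\mu_{p^\infty}\times\bQ_p/\bZ_p$, or supersingular, the connected one-dimensional formal group of height $2$, whose endomorphism algebra is the quaternion division algebra $D_p$ over $\bQ_p$. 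If $E$ is ordinary, the two distinct slopes force $\End^0(E)\otimes\bQ_p\hookrightarrow\bQ_p\times\bQ_p$, so $\End^0(E)$ is commutative --- hence $\bQ$, or an imaginary quadratic $K$ with $K\otimes\bQ_p\cong\bQ_p\times\bQ_p$, i.e.\ $p$ split in $K$ and hence in $\cO$. If $E$ is supersingular, then (using a model over $\bF_{p^2}$ together with the isomorphism $\End(E)\otimes\bZ_p\cong\End(E[p^\infty])$) one gets $\End^0(E)\otimes\bQ_p\cong D_p$, which is noncommutative of dimension $4$, forcing $\dim_\bQ\End^0(E)=4$ and placing us in the quaternion case.

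The main obstacle is exactly this last implication: that a supersingular $E$ really has a four-dimensional endomorphism algebra, equivalently that $\End(E)\otimes\bZ_p$ exhausts $\End(E[p^\infty])$ --- the local-at-$p$ analogue of Tate's isogeny theorem. The division-algebra-with-positive-involution package and Albert's list are formal, and the $\ell$-adic and $p$-adic local computations are routine, but showing that the endomorphisms are as large as the $p$-divisible group permits is the substantive input; one supplies it via Deuring's original argument \cite{Deuring}, Tate's theorem over finite fields, or Honda--Tate theory. Secondary points to check are the parity and nonemptiness of the quaternion ramification set (which is what pins it to $\{p,\infty\}$) and the local maximality of $\End(E)$ at each prime, from which $\bB$ maximal follows.
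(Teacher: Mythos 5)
The paper offers no proof of this statement: it is quoted as recalled background, with the burden deferred to the references (Deuring, Schertz) listed at the top of Section 2. Your proposal is a correct reconstruction of the standard argument behind that citation --- the Deuring/Albert classification via the rank bound from Tate modules, simplicity giving a division algebra, positivity of the Rosati involution eliminating dimension $3$ and forcing the quadratic case to be imaginary and the quaternion case to be definite, and the $p$-divisible group analysis separating the ordinary case (commutative, $p$ split) from the supersingular case (where the local-at-$p$ isogeny theorem of Tate/Deuring supplies the four-dimensionality and the maximality of the order). You have also correctly flagged the one point where the statement as printed needs care: it is only literally true for $\End(E_{\overline{k}})$, since for example a supersingular curve over $\bF_p$ has $\End_{\bF_p}(E)$ equal to an order in $\bQ(\sqrt{-p})$, a field in which $p$ ramifies rather than splits --- a distinction the paper itself relies on later when it invokes Ibukiyama's classification of maximal orders containing $\bZ[\sqrt{-p}]$. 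Your honest identification of the supersingular four-dimensionality step as the substantive input, rather than something formal, is exactly right; everything else in the outline is routine and correctly assembled.
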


From the above we see that if ever we can reduce a CM elliptic curve $E$ at a prime inert in $K$ we will obtain a supersingular elliptic curve. In the characteristic $p$ setting it will be this case we are most interested in.

\begin{nota}
Let $m\in\bZ^+$ be square free so that $K=\bQ(\sqrt{-m})$ is the quadratic imaginary field of discriminant $D$, denote by $\cO_K$ its maximal order and $\cO = \cO_{K,\ff} = \bZ + \ff\cO_K$ an order of conductor $\ff\in \bZ$. Denote by:
\[ P_{\cO}(X) = \prod_{\fa} (X-j(\bC/\fa)). \]
where the product is over a set of representatives $\fa \triangleleft \cO$ for the class group $C\ell(\cO)$ of $\cO$.
Denote by $L$ the splitting field of $P_{\cO}(X)$ over $K$.
\end{nota}

The following facts are well known, for a reference see for example \cite{SchertzCM}.
\begin{itemize}
\item $P_{\cO}(X) \in \bZ[X]$ and is irreducible over $K$.
\item $L$ is abelian over $K$, with $\Gal(L/K) \simeq C\ell(\cO)$, the action being the natural permutation action of $C\ell(\cO)$ on the roots.
\item $L$ is galois over $\bQ$, the action of $\Gal(K/\bQ)$ on $ C\ell(\cO)$ being $g\mapsto g^{-1}$ so that $\Gal(K/\bQ)$ is a generalized dihedral group.
\item The action of complex conjugation on the ideals of $K$ agrees with the action on the set of elliptic curves which admit CM by $\cO$, which in turn agrees with the action of $\Gal(K/\bQ)$.
\item $L/K$ is ramified only at primes over $\ff$, whereas $L/\bQ$ is ramified only at primes over $D\ff$.
\end{itemize}
We shall denote by $N = \bQ(j) = \bQ[X]/(P_{\cO}(X)) \subset L$.

Based on the above we can conclude the following:
\begin{itemize}
\item If $p$ is inert in $K$ and $p$ does not divide $\ff$ (or equivalently that $\left(\frac{-D\ff^2}{p}\right) = -1$) then $p$ splits in $L/K$.
\item If $\left(\frac{-D\ff^2}{p}\right) = -1$ then $P_{\cO}(X) $ factors as a product of quadratic and linear terms over $\bZ_p$.
\end{itemize}

\begin{rmk}
The above agrees with the fact that the reductions of these elliptic curves (together with their CM actions) have models over $\bF_{p^2}$, as they are known to be supersingular.
\end{rmk}

\begin{prop}\label{prop:charpbij}
If $p$ is inert in $K$ and $E$ is an elliptic curve which admits CM by $\cO$ then the reduction of $E$ modulo $p$ is supersingular.
In particular, $\End(\overline{E}) = \bB$, where $\bB$ is a maximal order in a quaternion algebra ramified only at $p$ and infinity.
By reduction we may associate to such a curve $E$ the pair $(\End(E) \subset \End(\overline{E}))$.
This mapping gives a bijection between elliptic curves $E$, which admit CM by $\cO$, and isomorphism classes of pairs
$ (\cO \subset \bB) $
of $\cO$ with an optimal embedding into a maximal order $\bB$ as above. 

Moreover, there is a natural action of $C\ell(\cO)$ on such pairs, that is $\fa \triangleleft \cO$ takes the pair $ (\cO \subset \bB)$ to $ (\cO \subset \fa\bB\fa^{-1})$.
Under this action the set of elliptic curves which admit CM by $\cO$ is a principal homogeneous space under $C\ell(\cO)$.
In particular $\End(\overline{\fa\ast E}) = \fa\End(\overline{E})\fa^{-1}$.
\end{prop}
The original result of \cite{Dorman1989global} is corrected and generalized in \cite{lauter2015arithmetic}.

From now on we shall be working in the setting where $p$ is split in $K$ and $p$ does not divide $\ff$.
In particular we are assuming that $\left(\frac{-D\ff^2}{p}\right) = -1$.

\begin{prop}
If $P_{\cO}(X) $ has a linear factor over $\bZ_p$, the number of such linear factors is $\abs{\Gal(L/K)[2]}$ the size of the two torsion of the class group.
\end{prop}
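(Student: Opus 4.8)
The plan is to exploit the fact (recalled in the background) that $P_{\cO}(X)$ factors over $\bZ_p$ into linear and quadratic pieces, and to interpret the linear factors Galois-theoretically. Write $G = \Gal(L/K) \simeq C\ell(\cO)$ and let $\sigma \in \Gal(L/\bQ)$ be (any lift of) complex conjugation; recall from the background that $\sigma$ acts on $G$ by inversion, so $\Gal(L/\bQ) = G \rtimes \langle\sigma\rangle$ is generalized dihedral. Since $p$ is inert in $K$ and $p\nmid\ff$, the prime $p$ splits in $L/K$ and is unramified throughout $L/\bQ$; the decomposition group of a prime of $L$ above $p$ inside $\Gal(L/\bQ)$ therefore has order $2$ and is generated by an element of the form $\tau = g\sigma$ for some $g \in G$ (it cannot lie in $G$, since $p$ does not split in $K$). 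As one varies over the primes above $p$, the element $g$ ranges over a full coset of the subgroup $\{h^{-1}\sigma(h) : h\in G\} = \{h^{-2}\}$, i.e. over a coset of $G^2$, so in fact every Frobenius at $p$ is conjugate in $\Gal(L/\bQ)$ to an element $g\sigma$ with $g$ in a fixed coset of $2G$ (written additively).

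Next I would translate ``$P_{\cO}(X)$ has a linear factor over $\bZ_p$'' into a statement about this coset. A root $j$ of $P_{\cO}$ generates $N = \bQ(j)$, with $\Gal(L/N)$ the stabilizer of $j$ under the permutation action; concretely $N$ is the fixed field of a point-stabilizer, and since $G$ acts simply transitively on the roots over $K$, we may identify the roots with $G$ itself and $N = L^{\langle\sigma\rangle}$ after choosing $j$ to be the root fixed by $\sigma$ (equivalently, the real root — this uses that complex conjugation acts on the roots the same way it acts as a Galois element, also recalled above). Then a root of $P_{\cO}$ lies in $\bZ_p$ exactly when the corresponding decomposition group in $\Gal(L/\bQ)$ is conjugate into $\Gal(L/N) = \langle\sigma\rangle$; combined with the previous paragraph, $P_{\cO}$ has a linear factor over $\bZ_p$ precisely when the relevant coset of $2G$ contains $0$, i.e. the coset is $2G$ itself, and in that case the number of roots landing in $\bZ_p$ equals the number of $G$-conjugates (translates) of the base point fixed by some conjugate of $\sigma$ lying in the decomposition group — which is the number of solutions $h \in G$ of $h\sigma h^{-1} \in$ (decomposition group), i.e. $h^{-2} = $ (the fixed $g$), so $\abs{G[2]}$ many.

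Concretely: once we know one linear factor exists, pick the corresponding real root $j_0$, so the decomposition group at the associated prime is exactly $\langle\sigma\rangle$. The roots fixed by $\sigma$ (equivalently, those whose attached prime also has decomposition group a conjugate of $\langle\sigma\rangle$ landing inside $\Gal(L/N)$ for that particular root) are exactly the $j_g := g\cdot j_0$ with $\sigma(j_g) = j_g$, i.e. $g\sigma(g)^{-1}$ fixes $j_0$, i.e. $g^2 = 1$ in $G = C\ell(\cO)$. Hence the linear factors of $P_{\cO}$ over $\bZ_p$ are in bijection with $G[2] = \Gal(L/K)[2]$, giving the count. The main obstacle is the bookkeeping in this last step: one must be careful that ``reduces to an $\bF_p$-rational $j$-invariant'' is the same as ``the root lies in $\bZ_p$'' (true since $P_{\cO} \in \bZ[X]$ and $\bZ_p$ is integrally closed, so a root in $\bQ_p$ is automatically in $\bZ_p$, and $\bZ_p/p\bZ_p = \bF_p$), and that the decomposition-group computation is insensitive to the choice of prime above $p$ — which follows from all such primes being $\Gal(L/\bQ)$-conjugate and from $G$ acting transitively on the roots. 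Everything else is formal dihedral group theory.
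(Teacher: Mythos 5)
Your argument is correct and is essentially the paper's own proof spelled out: the paper's two-line proof ("count the size of the conjugacy class of Frobenius; basic property of dihedral groups") is exactly your computation that Frobenius is a reflection $g\sigma$ in the generalized dihedral group $\Gal(L/\bQ)$, that a linear factor exists iff $g$ lies in $G^2$, and that the fixed roots are then counted by solutions of $h^2=g$, i.e.\ by $\abs{\Gal(L/K)[2]}$. (The only quibble is the harmless sign slip $h\sigma h^{-1}=h^2\sigma$, so the condition is $h^2=g$ rather than $h^{-2}=g$; the count is unaffected.)
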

\begin{proof}
By basic algebraic number theory we must count the size of the conjugacy class of Frobenius.
This is then a basic property to dihedral groups.
\end{proof}

\begin{rmk}
If $\abs{\Gal(L/K)[2]} = 1$ then $P_{\cO}(X) $ has a unique linear factor over $\bZ_p$.
\end{rmk}

\begin{thm}[Deuring]
If $E$ corresponds to the data $(\cO \subset \bB)$ then the reduction of $E$ modulo $p$ is defined over $\bF_p$ (rather than simply $\bF_{p^2}$) if and only if $\bB$ contains $\bZ[\sqrt{-p}]$.
\end{thm}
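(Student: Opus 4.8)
The plan is to reduce the statement to a comparison of two order-$p$ subgroup schemes of $\overline E$. Throughout write $\overline E$ for the reduction of $E$, and recall from the Proposition above that $\overline E$ is supersingular with $\End(\overline E)=\bB$; thus the condition ``$\bB$ contains $\bZ[\sqrt{-p}]$'' literally means that $\End(\overline E)$ contains an element $\pi$ with $\pi^2=-p$, since $\bZ[\sqrt{-p}]\cong\bZ[x]/(x^2+p)$. I will also use the elementary fact that $\overline E$ is defined over $\bF_p$ if and only if $j(\overline E)\in\bF_p$, equivalently $j(\overline E)^p=j(\overline E)$, equivalently $\overline E^{(p)}\cong\overline E$ over $\overline{\bF}_p$; this identifies ``defined over $\bF_p$'' with an intrinsic condition on $\overline E$.

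For the forward direction, suppose $\overline E$ is defined over $\bF_p$ and fix a model $E_0/\bF_p$. The relative $p$-power Frobenius $\pi_0$ of $E_0$ base changes to an endomorphism of $\overline E$, so $\pi_0\in\End(\overline E)=\bB$ with $\Nm(\pi_0)=\deg\pi_0=p$, and its reduced trace is $a_p(E_0)=p+1-\#E_0(\bF_p)$. Supersingularity forces $p\mid a_p(E_0)$, while the Hasse bound gives $|a_p(E_0)|\le 2\sqrt p$, so $a_p(E_0)=0$ as soon as $p\ge 5$ (for $p=2,3$, where there is a single supersingular $j$-invariant, the inclusion $\bZ[\sqrt{-p}]\subseteq\bB$ is checked directly). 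Hence $\pi_0^2=-p$, and $\bZ[\pi_0]\cong\bZ[\sqrt{-p}]$ lies inside $\bB$.

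For the converse, suppose $\pi\in\bB=\End(\overline E)$ satisfies $\pi^2=-p$. Then $\bar\pi\pi=[p]$, so $\ker\pi\subseteq\overline E[p]$, and $\deg\pi=\Nm(\pi)=p$ forces $\ker\pi$ to be a subgroup scheme of order $p$. Here I invoke the structure theory of the $p$-torsion of a supersingular elliptic curve: $\overline E[p]$ is local–local of order $p^2$ and contains a \emph{unique} subgroup scheme of order $p$, namely $\ker\bigl(\Frob\colon\overline E\to\overline E^{(p)}\bigr)$ (equivalently, the two-dimensional Dieudonné module of $\overline E[p]$ has $F,V$ nilpotent with $\ker F=\operatorname{im}V=\ker V=\operatorname{im}F$ a single line). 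Therefore $\ker\pi=\ker\Frob$, so $\pi$ factors as $\overline E\xrightarrow{\Frob}\overline E^{(p)}\xrightarrow{\lambda}\overline E$ with $\deg\lambda=\deg\pi/p=1$; thus $\lambda$ is an isomorphism, $\overline E^{(p)}\cong\overline E$, and $\overline E$ is defined over $\bF_p$.

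The main obstacle is the structural input used in the converse — that a supersingular $\overline E[p]$ has a unique order-$p$ subgroup scheme and that it is the kernel of Frobenius; this is what makes the geometric Frobenius essentially unique and so pins down $j(\overline E)\in\bF_p$. Everything else is bookkeeping with degrees, reduced norms and traces, the Hasse bound, and the self-duality of $\overline E[p]$ coming from the Weil pairing.
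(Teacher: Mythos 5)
The paper does not prove this theorem at all --- it is quoted as a known result of Deuring with the citation ``See \cite{Deuring}'' --- so there is no in-paper argument to compare against; your proposal has to stand on its own, and it does. Both directions are the standard modern argument and are correct. Forward: a model $E_0/\bF_p$ of a supersingular curve has $p\mid a_p(E_0)$ and $|a_p(E_0)|\le 2\sqrt p$, hence $a_p=0$ for $p\ge 5$, so the $\bF_p$-Frobenius satisfies $\pi_0^2=-p$ and generates a copy of $\bZ[\sqrt{-p}]$ in $\End(\overline E)=\bB$ (and your parenthetical disposal of $p=2,3$ is harmless since the theorem is only ever applied for $p\nmid D\ff$ with $p$ odd). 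Converse: an element $\pi$ with $\pi^2=-p$ has degree $p$, its kernel is the unique order-$p$ subgroup scheme $\ker F$ of the local--local $\overline E[p]$, so $\pi$ factors as an isomorphism after $\Frob$, giving $\overline E^{(p)}\cong\overline E$ and $j(\overline E)\in\bF_p$. The one point worth making explicit is the identification of ``defined over $\bF_p$'' with ``$j(\overline E)\in\bF_p$,'' which you do state; since the theorem is phrased at the level of $j$-invariants elsewhere in the paper, that reading is the intended one. A minor refinement you could add, which your forward direction in fact already yields, is that the embedding $\bZ[\sqrt{-p}]\hookrightarrow\bB$ can be taken to send $\sqrt{-p}$ to the geometric Frobenius itself; this is the normalization Ibukiyama's classification (quoted immediately after this theorem in the paper, with $\sqrt{-p}\mapsto\pm\alpha$) relies on, so recording it makes the hand-off to the next theorem cleaner. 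No gaps.
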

See \cite{Deuring}.

In \cite{Ibukiyama} Ibukiyama gives a complete classification of the maximal orders $\bB$ which contain $\bZ[\sqrt{-p}]$.

\begin{nota}
Fix $p$ and $q = 3\pmod{8}$ such that $\bB = (-p,-q)$ is the quaternion algebra ramified only at $p$ and $\infty$.
Fix $\alpha,\beta\in \bB$ such that $\alpha^2 = -p$, $\beta^2=-q$ and $\alpha\beta = -\beta\alpha$.
Choose $r \in \bZ$ such that $r^2 + p = mq$ for some $m\in \bZ$.
 
Denote:
\[   O(p,q,r,m) = \bZ + \bZ \frac{\alpha(1+\beta)}{2}+ \bZ\frac{1+\beta}{2}  + \bZ\frac{(r+\alpha)\beta}{q} \]
If $p=3\pmod{4}$ 
choose $r' \in \bZ$ such that $(r')^2 + p = 4m'q$ for some $m'\in \bZ$.
Denote:
\[   O'(p,q,r',m') = \bZ + \bZ \frac{1+\alpha}{2} + \bZ\beta + \bZ\frac{(r+\alpha)\beta}{2q}. \]
\end{nota}

\begin{thm}[Ibukiyama]
The sets $O(p,q,r,m)$ (and $O'(p,q,r',m')$) are maximal orders of $\bB$, their isomorphism classes depend only on $q$ and not on $r$ or $m$. Moreover, all pairs consisting of a maximal order in $\bB$ with an embedding of $\bZ[\sqrt{-p}]$ are of the form $O(p,q,r,m)$ (or $O'(p,q,r',m')$) with the embedding taking $\sqrt{-p} \rightarrow \pm\alpha$.

The orders $O(p,q,r,m)$ and $O'(p,q,r',m')$ are only ever isomorphic if they correspond to the $j$-invariant $1728$, equivalently if they admit an embedding of $\bZ[\frac{\sqrt{-3}+1}{2}]$.
\end{thm}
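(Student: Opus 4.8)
This is Ibukiyama's theorem; here is how I would prove it, in three stages, with the local analysis at the end being the hard part.

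\textbf{Step 1 (the $O$'s and $O'$'s are maximal orders containing $\bZ[\sqrt{-p}]$; independence of $r$, $m$).} First I would check that each displayed $\bZ$-lattice is closed under multiplication: expanding products of the four generators using $\alpha^2=-p$, $\beta^2=-q$, $\alpha\beta=-\beta\alpha$, everything lands back in the lattice exactly because $r^2+p=mq$, i.e.\ $r^2\equiv-p\pmod q$ (for the $O'$-family one also needs $p\equiv3\pmod4$, so that $\tfrac{1+\alpha}2$ is integral, and $q\equiv3\pmod8$, to clear the denominators $2$ and $2q$). Since $\beta\in O(p,q,r,m)$ and $\beta\cdot\tfrac{(r+\alpha)\beta}{q}=\alpha-r$, one gets $\alpha\in O(p,q,r,m)$, so $\sqrt{-p}\mapsto\alpha$ is a genuine embedding of $\bZ[\sqrt{-p}]$; likewise $\bZ[\alpha]\subseteq\bZ[\tfrac{1+\alpha}2]\subseteq O'(p,q,r',m')$. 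Maximality is the statement that the reduced discriminant equals $p=\disc\bB$, which I would read off from the determinant of the Gram matrix of the reduced trace pairing $\langle x,y\rangle=\Tr(x\bar y)$ on the given basis. For independence of the parameters: replacing $r$ by $r+q$ changes the fourth generator by $\beta\in O(p,q,r,m)$, so the lattice depends only on $r\bmod q$ (with $m$ then forced), and conjugation by $\beta$---a ring automorphism of $\bB$ fixing $1$ and $\beta$ and sending $\alpha\mapsto-\alpha$---carries $O(p,q,r,m)$ onto $O(p,q,-r,m)$; since $x^2\equiv-p\pmod q$ has the two roots $\pm r_0$, all $O(p,q,r,m)$ (resp.\ all $O'(p,q,r',m')$) with $q$ fixed are isomorphic, in fact $\bB^\times$-conjugate.

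\textbf{Step 2 (every pair is of this form).} Given a maximal order $\mathcal O$ with an embedding $\iota\colon\bZ[\sqrt{-p}]\hookrightarrow\mathcal O$, I would use Skolem--Noether (all embeddings of $\bQ(\sqrt{-p})$ into $\bB$ are conjugate) to replace $\mathcal O$ by a $\bB^\times$-conjugate with $\iota(\sqrt{-p})=\alpha$, so that $\bZ[\alpha]\subseteq\mathcal O$. Put $K'=\bQ(\alpha)$, so $\bB=K'\oplus K'\beta$, and set $A=\mathcal O\cap K'$; this is an order of $K'$ containing $\bZ[\sqrt{-p}]$, hence $A=\bZ[\sqrt{-p}]$ or---possible only when $p\equiv3\pmod4$---$A=\cO_{K'}=\bZ[\tfrac{1+\alpha}2]$, and this is precisely the dichotomy between the $O$- and the $O'$-families. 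I would then determine $\mathcal O$ prime by prime. At $\ell=p$ the completion $\bB_p$ is the quaternion division algebra over $\bQ_p$, with its unique maximal order, which fixes $\mathcal O_p$ (automatically containing $\bZ_p[\alpha]$, since $\alpha$ has valuation $1$). At each $\ell\nmid p$ one has $\bB_\ell\cong\Mat_2(\bQ_\ell)$, maximality forces $\mathcal O_\ell$ to be $\bB_\ell^\times$-conjugate to $\Mat_2(\bZ_\ell)$, and the requirement that it contain $\bZ_\ell[\alpha]$ with $\alpha$ of reduced norm $p$ pins down the $K'\beta$-component. Gluing these local pictures forces the $K'\beta$-part of $\mathcal O$ to be $\tfrac{r+\alpha}{q}A\beta$ (resp.\ $\tfrac{r'+\alpha}{2q}A\beta$) for a suitable auxiliary prime $q\equiv3\pmod8$---one satisfying the residue conditions that make $(-p,-q)\cong\bB$, such $q$ existing by Dirichlet---and a root $r\bmod q$ of $x^2\equiv-p$. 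Reassembling identifies $\mathcal O$ with some $O(p,q,r,m)$ or $O'(p,q,r',m')$; and since $\iota(\sqrt{-p})$ and its conjugate share a minimal polynomial, with conjugation by $\beta$ interchanging the two (Step 1), the embedding is $\sqrt{-p}\mapsto\pm\alpha$.

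\textbf{Step 3 (the two families are isomorphic only at $j=1728$; the main obstacle).} Finally, to see the families are almost disjoint, I would note that an isomorphism $O(p,q,\dots)\xrightarrow{\sim}O'(p,q',\dots)$ is conjugation by some $g\in\bB^\times$, and it carries $O(p,q,\dots)\cap K'=\bZ[\sqrt{-p}]$ onto an optimally embedded conductor-$2$ suborder of $O'(p,q',\dots)$, which already contains the optimally embedded maximal order $\cO_{K'}$. So $O'(p,q',\dots)$ would admit optimal embeddings of two $\bQ(\sqrt{-p})$-orders whose conductors differ by $2$; by the theory of optimal embeddings this can happen only when the unit group exceeds $\{\pm1\}$---and, the conductors differing by exactly $|\bZ[i]^\times|/2$, only when it is cyclic of order $4$. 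Among maximal orders in $\bB$, only the one attached by Deuring's correspondence to the supersingular $j$-invariant $1728$ has unit group of order $4$, and for it one checks directly (using $q\equiv3\pmod8$) that both presentations do occur. The genuinely substantive part of all this is the local analysis at $\ell=2$ hidden in Step 2: deciding exactly when the half-integral generators $\tfrac{1+\beta}2,\ \tfrac{\alpha(1+\beta)}2$ (in the $O$-case) or $\tfrac{1+\alpha}2$ (in the $O'$-case) may or must appear in a maximal order---this is where $q\equiv3\pmod8$ and the parity of $r$ are used, where the split into the two families is dictated, and the step I expect to be the main obstacle.
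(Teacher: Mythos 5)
First, for calibration: the paper does not prove this statement at all --- it is quoted as Ibukiyama's theorem and the proof is deferred to \cite{Ibukiyama} with a bare ``See \cite{Ibukiyama}'' --- so there is no in-paper argument to compare yours against, and any proof you give is necessarily ``your own route.'' On that basis, your Steps 1 and 2 have the right architecture and match what Ibukiyama actually does: closure under multiplication from $r^2+p=mq$ and $q\equiv 3\pmod 4$, maximality read off from the reduced discriminant, independence of $r$ via $r\mapsto r+q$ (which shifts the fourth generator by $\beta\in O(p,q,r,m)$) together with conjugation by $\beta$ (which sends $r\mapsto -r$), and for surjectivity Skolem--Noether plus the decomposition $\bB=K'\oplus K'\beta$ with the dichotomy $\cO\cap K'=\bZ[\sqrt{-p}]$ versus $\bZ[\tfrac{1+\sqrt{-p}}{2}]$ separating the two families. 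You are also right that the $2$-adic local analysis is the crux of Step 2; but note that as written you assert the conclusion of the gluing rather than derive it, and the existence of the auxiliary prime $q$ is not ``by Dirichlet'' alone: one needs a prime $q\equiv 3\pmod 8$ lying in the correct ideal class of $A$ (equivalently, represented by the binary quadratic form attached to the fractional ideal $J$ with $\cO\cap K'\beta=J\beta$), which is a theorem about primes represented by forms and has to be invoked as such.

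The genuine gap is Step 3. The pivotal claim --- that a maximal order admitting optimal embeddings of two orders of $\bQ(\sqrt{-p})$ whose conductors differ by $2$ must have unit group larger than $\{\pm 1\}$, and indeed cyclic of order $4$ because the conductors differ by $|\bZ[i]^\times|/2$ --- is not a citable consequence of ``the theory of optimal embeddings.'' Eichler's theory counts optimal embeddings of a \emph{fixed} order summed over all conjugacy classes of maximal orders; it says nothing about one maximal order simultaneously containing optimal copies of two different orders of the same field, and the numerological match between the conductor ratio $2$ and $|\bZ[i]^\times|/2=2$ is not an argument. In particular, nothing you have written excludes the order with unit group of order $6$ (attached to $j=0$, which is supersingular alongside $1728$ whenever $p\equiv 11\pmod{12}$); you rule it out only by the unargued ``cyclic of order $4$'' clause. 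Two honest ways to close this: (i) Ibukiyama's explicit isomorphism criterion (his Prop.\ 2.1 and Rmk 2.2, the very facts this paper invokes in Lemma \ref{lem:qchoice}), which reduces $O(p,q_1,\cdot,\cdot)\cong O'(p,q_2,\cdot,\cdot)$ to the solvability of $4q_1q_2=z^2p+(2yq_1+rz)^2$ and lets one check that only the distinguished class survives; or (ii) the Deuring-side argument: for a supersingular $j\in\bF_p$ with $j\neq 0,1728$ there are exactly two $\bF_p$-forms, interchanged by the quadratic twist, and since the twist replaces the Frobenius $\pi$ by $-\pi$ it preserves the order $\bQ(\pi)\cap\End(E)$, so both forms have the same type; only $j=1728$, with its four forms and the extra automorphism $i$ satisfying $(i\pi)^2=-p$, can realize both types.
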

See \cite{Ibukiyama}.

\begin{rmk}
In $O(p,q,r,m)$ we may write:
\[ \alpha = 2\left(\frac{\alpha(1+\beta)}{2}\right) -q \left(\frac{(r+\alpha)\beta}{q}\right) + qr. \]
\end{rmk}

\begin{rmk}
We can count the number of isomorphism classes of $O(p,q,r,m)$ (respectively $ O'(p,q,r',m')$) by looking at the class numbers $h_{p}$ for $\bZ[\sqrt{-p}]$ (and $\tilde{h}_p$ for $\bZ[(1+\sqrt{-p})/2]$), we have the following standard formulas (for $p\neq 3$):
\begin{itemize}
\item The number of supersingular $j$ invariants over $\bF_{p^2}$ is $n=\floor{(p-1)/12} + e_{0} + e_{1728}$, where $e_{x}$ is $0$ or $1$ depending on if $x$ is supersingular at $p$.
\item If $p = 7 \pmod{8}$ then $h_{p} = \tilde{h}_{p}$ and there are $(h_{p}+1)/2$ options for both $O(p,q,r,m)$ and $O'(p,q,r',m')$.
\item If $p = 3 \pmod{8}$ then $h_{p} = 3\tilde{h}_{p}$ and there are $(h_p+1)/2$ options for $O'(p,q,r',m')$ and $(\tilde{h}_p+1)/2$ options for $O'(p,q,r',m')$.
\item If $p= 1 \pmod{4}$ there are $h_p/2$ options for $O(p,q,r,m)$.
\end{itemize}
Combining the above allows us to compute the number of $\bF_p$ rational supersingular values in terms of $h_{p}$.

More generally, if we fix $K=\bQ(\sqrt{-D})$ a quadratic imaginary field of discriminant $-D$ and class number $h_K$.
Fix an order $\cO = \bZ + \ff\cO_K$ and write $\ff = \prod q_i^{a_i}$
The class number of $\cO$ is given by:
\[   h_{\cO} = \epsilon h_K \prod_{i} \left(q_i - \left(\frac{-D}{q_i}\right)\right)q_i^{a_i-1} \]
where $\epsilon = 1$ unless $D=-3$ or $D=-4$.

If $D=-3$ and the formula above is divisible by $3$ then $\epsilon =\tfrac{1}{3}$.
If $D=-4$ and the formula above is divisible by $2$ then $\epsilon =\tfrac{1}{2}$.
\end{rmk}

\begin{thm}[Halter-Koch]
If $n$ is the number of prime divisors of $D\ff$ then:
\[ \abs{C\ell(\cO)[2]} = \begin{cases} 2^{n-1}  & D\ff \text{ odd} \\
                                                              2^{n-2}  & 2 || D\ff  \\
                                                              2^{n-1}  & 4 || D\ff  \\
                                                              2^{n-1}  & 8 || D\ff  \\
                                                              2^{n}  & 16 | D\ff  \end{cases}. \]

More precisely, the ring class field of $\cO$ contains:
\[ \bQ\left(\sqrt{(-1)^{(q-1)/2} q}\right) \]
where $q$ is an odd prime factor of $D\ff$.

If $D = -8m$ then the ring class field of $\cO$ contains:
\[ \bQ\left(\sqrt{(-1)^{(m-1)/2} 2}\right).  \]

If $D = 4\pmod 8$ and $4|\ff$ then the ring class field of $\cO$ contains:
\[ \bQ\left( \sqrt{2}\right) .\]

If $D$ is odd, and $8|\ff$ then the ring class field of $\cO$ contains:
\[ \bQ\left( \sqrt{2}\right). \]

If $D = 4\pmod{8}$, or $2|\ff$ and $2|D$, or $D$ is odd and $4|\ff$ then the ring class field of $\cO$ contains:
\[ \bQ\left(\sqrt{-1}\right). \]

The above fields generate the genus field $F$, moreover, this is thee maximal subextension of the ring class field of $\cO$ generated by quadratic extensions.
\end{thm}
See \cite[Thm 6.1.4]{SchertzCM}.

\section{Results}\label{sec:res}

In this section we will present our main theorems. 
These are primarily structured to address the entries in the data which we present in the Appendix.

We will begin by looking at certain conditions on $\cO$ under which there can be no elliptic curves over $\bZ_p$ which admit CM by $\cO$.
These first results can naturally be viewed as generalizations of those of \cite{MortonGenus} and \cite{StankewiczTwistsofShimura} which can be interpreted as giving the conditions on the odd prime factors of the discriminants.

\begin{thm}\label{thm:nolinear}
Fix $K=\bQ(\sqrt{-D})$ of discriminant $-D$. Fix an order $\cO = \bZ + \ff\cO_K$ of conductor $\ff \in \bZ$ and suppose that $\left(\frac{-D\ff^2}{p}\right) = -1$.
There are no elliptic curves over $\bZ_p$ which admit CM by $\cO$ if any of the following occur:
\begin{itemize}
\item  there is an odd prime factor $q$ of $D\ff$ with $\left(\dfrac{-p}{q}\right) = -1$
\item $p=1\pmod{4}$ and $16 | D\ff^2$.
\item $p=3\pmod{8}$ and $8 | D$.
\item $p=3\pmod{8}$ and $64 | D\ff^2$
\end{itemize}
Otherwise there are exactly $\abs{C\ell(\cO)[2]}$ $j$-invariants for elliptic curves over $\bZ_p$ which admit CM by $\cO$.
\end{thm}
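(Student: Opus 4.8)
The plan is to show that the number of $j$-invariants of elliptic curves over $\bZ_p$ with CM by $\cO$ equals the number of roots of $P_{\cO}(X)$ in $\bZ_p$, and then to decide exactly when such a root exists by a Galois-theoretic computation on the genus field. For the reduction: any $E/\bZ_p$ with CM by $\cO$ has $j(E)\in\bZ_p$, and this must be a root of $P_{\cO}$, which is the minimal polynomial of every $j$-invariant of a curve with CM by $\cO$. Conversely, given a root $j_0\in\bZ_p$ of $P_{\cO}$, the curve over $\bQ_p$ with $j$-invariant $j_0$ has CM by $\cO$ and, being CM, has potential good reduction; since $\left(\frac{-D\ff^2}{p}\right)=-1$ means $p$ is inert in $K$ with $p\nmid\ff$, it acquires good reduction already over the unramified quadratic extension of $\bQ_p$, so a suitable unramified quadratic twist has good reduction over $\bZ_p$ with the same $j$-invariant. (The curves with $j\in\{0,1728\}$, i.e.\ $D\in\{-3,-4\}$, are treated directly.) Thus it suffices to count $\bZ_p$-roots of $P_{\cO}$.

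Next I would pass to $\Gal(L/\bQ)$. By the recalled facts this is the generalized dihedral group $C\ell(\cO)\rtimes\langle c\rangle$ with $c$ complex conjugation acting by inversion, $N=\bQ(j)$ is the fixed field of $\langle c\rangle$, and the roots of $P_{\cO}$ form the $\Gal(L/\bQ)$-set $\Gal(L/\bQ)/\langle c\rangle$. Since $p$ is inert in $K$, $\Frob_p$ is nontrivial on $K$, hence is a reflection (an involution outside $C\ell(\cO)$), and $p$ is unramified in $L$. A direct count of fixed points of a reflection on $\Gal(L/\bQ)/\langle c\rangle$ shows that the number of $\bZ_p$-roots of $P_{\cO}$ is $0$ unless $\Frob_p$ is $\Gal(L/\bQ)$-conjugate to $c$, in which case it is $\abs{C\ell(\cO)[2]}$ (recovering the earlier Proposition). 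Now in a generalized dihedral group two reflections are conjugate precisely when congruent modulo $C\ell(\cO)^2$; as the maximal $2$-subextension $F$ of $L/\bQ$ — the genus field of $\cO$ — is the subfield fixed by $C\ell(\cO)^2$, this is equivalent to $\Frob_p|_F=c|_F$. Since $K\subseteq F$ and the two restrictions already agree on $K$, the whole question is whether they agree on the rest of the multiquadratic field $F$.

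I would then feed in the Halter-Koch description of $F$ as generated over $\bQ$ by the fields $\bQ(\sqrt{q^{*}})$, $q^{*}=(-1)^{(q-1)/2}q$, for the odd primes $q\mid D\ff$, together with the dyadic generators ($\bQ(\sqrt{-1})$ and $\bQ(\sqrt{\pm 2})$) prescribed by the congruences on $D$ and $\ff$. As both $\Frob_p$ and $c$ restrict to characters of $\Gal(F/\bQ)$, they agree iff they agree on each such generator $\bQ(\sqrt d)$, i.e.\ iff $\left(\frac{d}{p}\right)$ equals the sign of $d$ for every $d$ in the generating set; equivalently there is \emph{no} curve over $\bZ_p$ iff $\left(\frac{d}{p}\right)=-\operatorname{sign}(d)$ for some generator $d$. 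For $d=q^{*}$, quadratic reciprocity gives $\left(\frac{q^{*}}{p}\right)=\left(\frac{p}{q}\right)$ while $\operatorname{sign}(q^{*})=\left(\frac{-1}{q}\right)$, so the obstruction at this generator is exactly $\left(\frac{-p}{q}\right)=-1$, the first bullet. For the dyadic generators one evaluates $\left(\frac{-1}{p}\right)$, $\left(\frac{2}{p}\right)$, $\left(\frac{-2}{p}\right)$ through the supplementary laws (they depend only on $p\bmod 8$) against the corresponding signs, and matches the obstructed cases against the Halter-Koch conditions for when $\bQ(\sqrt{-1})$ and $\bQ(\sqrt{\pm2})$ actually lie in $F$; this yields precisely the second, third and fourth bullets. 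When none of the four holds, $\Frob_p\sim c$, so $P_{\cO}$ has a $\bZ_p$-root and the count is $\abs{C\ell(\cO)[2]}$.

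The main obstacle is this last step: arranging the dyadic case analysis so that the three opaque thresholds ($16\mid D\ff^{2}$, $8\mid D$, $64\mid D\ff^{2}$) come out exactly. A dyadic field can enter $F$ "for free" without the stated divisibility holding (for instance $\bQ(\sqrt{-1})\subseteq F$ already when $D\equiv 4\pmod 8$ with $\ff$ odd), so one must check that in each such borderline configuration the inertness hypothesis $\left(\frac{-D\ff^{2}}{p}\right)=-1$ itself forces the first bullet — so that these contribute no genuinely new obstruction — while in the remaining configurations the relevant dyadic generator is present exactly when the stated $2$-power divides $D\ff^{2}$. Carrying out this bookkeeping, along with the separate treatment of $D=-3,-4$, is where the real work lies; everything else is formal.
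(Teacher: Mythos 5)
Your proposal is correct and follows essentially the same route as the paper: reduce the question to whether $\Frob_p$ agrees with complex conjugation on the (totally real part of the) genus field $F$ --- the paper phrases this as the nonexistence of an inert quadratic subextension of $N=\bQ(j)$, proved via the dihedral structure of $\Gal(L/\bQ)$ --- and then translate that via Halter-Koch's generators and quadratic reciprocity into the listed congruence conditions. Your conjugacy-class count of fixed cosets recovers the paper's earlier proposition giving $\abs{C\ell(\cO)[2]}$ linear factors, and the dyadic ``bookkeeping'' you flag (including the borderline case $4||D$, $p\equiv 1\pmod 4$, where the inertness hypothesis forces the first bullet) is exactly the case analysis the paper's proof carries out.
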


\begin{rmk}
The condition that there is an odd prime factor $q$ of $D$ with $\left(\dfrac{-p}{q}\right) = -1$ implies in particular that the quaternion algebra
  $(-p,-D)$ is ramified at $q$. Though this can be used to justify the condition for those $q|D$, we will not follow this strategy of proof, rather we give a proof which has a more natural connection to class field theory.

The condition on odd primes cannot be extended to even primes by use of the Kronecker symbol, the dependence on the behaviour at $2$ is more subtle.
\end{rmk}

In order to prove the result we shall make use of a few lemmas.
\begin{lemma}
Fix $K=\bQ(\sqrt{-D})$ of discriminant $-D$. Fix an order $\cO = \bZ + \ff\cO_K$ and suppose that $\left(\frac{-D\ff^2}{p}\right) = -1$.
The polynomial $P_{\cO}(X)$ has a linear factor over $\bZ_p$ if and only if $N = \bQ(j(\cO))$ has no quadratic subextension in which $p$ is inert.
\end{lemma}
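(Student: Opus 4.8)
The plan is to reinterpret everything through the Galois theory of the ring class field $L$ from Section~\ref{sec:bg} and to reduce the statement to an elementary fact about reflections in the generalized dihedral group $G=\Gal(L/\bQ)$. First I would fix notation: $\Gal(L/K)\cong C\ell(\cO)$ and $G\cong C\ell(\cO)\rtimes\langle\iota\rangle$ with $\iota$ acting by inversion; I call an element of $G$ a \emph{reflection} if it lies in the non-identity coset of $\Gal(L/K)$ in $G$, equivalently if it restricts nontrivially to $K$ (each such element is an involution). Since $P_{\cO}(X)$ is irreducible over $\bQ$ and $\bQ[X]/(P_{\cO})\cong N$, with $L$ its splitting field, we have $L=NK$ and $N\cap K=\bQ$; hence the nontrivial element $c$ of $\Gal(L/N)$ cannot fix $K$, so it is a reflection, and the roots of $P_{\cO}$ in $L$ correspond $G$-equivariantly to the coset space $G/\langle c\rangle$, with the inclusion $N\hookrightarrow L$ having stabilizer $\langle c\rangle$. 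Finally, the hypothesis $\left(\frac{-D\ff^2}{p}\right)=-1$ means, as recalled in Section~\ref{sec:bg}, that $p$ is inert in $K$ and $p\nmid\ff$, so $p$ is unramified in $L$ and $\Frob_p\in G$ is well defined up to conjugacy; being nontrivial on $K$, it too is a reflection.

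The next step is to express ``$P_{\cO}$ has a linear factor over $\bZ_p$'' as a conjugacy condition on $\Frob_p$. For a prime of $L$ above $p$ the decomposition group is cyclic, generated by $\Frob_p$ (here unramifiedness is used), so the irreducible factors of $P_{\cO}$ over $\bQ_p$ — equivalently over $\bZ_p$, by Gauss's lemma — correspond to the orbits of $\Frob_p$ on $G/\langle c\rangle$, the linear ones being the fixed cosets. A coset $g\langle c\rangle$ is fixed exactly when $g^{-1}\Frob_p\,g\in\langle c\rangle$, and since $\Frob_p$ is a nontrivial reflection this forces $g^{-1}\Frob_p\,g=c$. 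Thus $P_{\cO}$ has a linear factor over $\bZ_p$ if and only if $\Frob_p$ is conjugate to $c$ in $G$.

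It then remains to match this with the subfield condition. Quadratic subextensions $\bQ\subseteq M\subseteq N$ correspond to index-$2$ subgroups $H\le G$ containing $c$ (all automatically normal), and since $p$ is unramified in $M$ it is inert there precisely when $\Frob_p\notin H$; so ``$N$ has no quadratic subextension in which $p$ is inert'' is exactly the statement that $\Frob_p$ lies in every index-$2$ subgroup of $G$ containing $c$. If $\Frob_p$ is conjugate to $c$ this is immediate from normality. For the converse I would use the elementary fact that two reflections $(a,\iota)$ and $(b,\iota)$ of $G$ are conjugate if and only if $a-b\in 2\,C\ell(\cO)$, i.e.\ if and only if no homomorphism $G\to\bZ/2$ separates them: writing $c=(c_0,\iota)$ and $\Frob_p=(\phi_0,\iota)$, if $\Frob_p\not\sim c$ then $\phi_0-c_0\notin 2\,C\ell(\cO)$, so some $\chi_1\in\Hom\!\left(C\ell(\cO)/2\,C\ell(\cO),\bZ/2\right)$ has $\chi_1(\phi_0-c_0)\ne0$; extending $\chi_1$ to $\chi\colon G\to\bZ/2$ by setting $\chi(\iota):=\chi_1(c_0)$ gives $\chi(c)=0$ but $\chi(\Frob_p)\ne0$, and then $H:=\ker\chi$ is an index-$2$ subgroup containing $c$ but not $\Frob_p$, hence a quadratic subextension of $N$ in which $p$ is inert. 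Chaining the three equivalences proves the lemma.

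I expect the Galois dictionary and the decomposition-group orbit count to be routine; the genuine content — and the step I would spend the most care on — is the converse in the third paragraph, where one must produce a quadratic subfield of $N$ detecting the difference between $\Frob_p$ and $c$. This rests on the elementary abelian structure of $C\ell(\cO)/2\,C\ell(\cO)$ (and, implicitly, on which reflections arise as $\Frob_p$), and it is essentially the qualitative shadow of the Halter--Koch genus-field description that the subsequent results turn into the precise list of cases.
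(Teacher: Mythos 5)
Your argument is correct and proves the lemma. It shares the paper's skeleton --- the forward direction via multiplicativity of inertial degrees, and the reduction of the converse to exhibiting an index-two (hence normal) subgroup of $G=\Gal(L/\bQ)$ containing $\Gal(L/N)=\langle c\rangle$ but not $\Frob_p$ whenever these two reflections are non-conjugate --- but it executes the key step differently. The paper takes generators $\sigma$ of the decomposition group and $\tau$ of $\Gal(L/N)$, asserts that they commute and that $\sigma\tau$ is an indivisible element of exact order two in $\Gal(L/K)$, splits $\Gal(L/K)=\langle\sigma\tau\rangle\times H$, and takes the fixed field of $H\rtimes\langle\tau\rangle$. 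You instead compute the conjugacy classes of reflections once and for all ($(a,\iota)\sim(b,\iota)$ iff $a-b\in 2\,C\ell(\cO)$) and, in the non-conjugate case, manufacture a separating quadratic character of $G$ vanishing on $c$, whose kernel is the required subgroup. Your route is the more robust one: it never needs $\sigma\tau$ to have order two, which is exactly the delicate point in the paper's write-up --- when $\sigma\not\sim\tau$ the element $\sigma\tau$ is merely outside $2\,C\ell(\cO)$ and can, for instance, generate a cyclic class group of order $8$, in which case the asserted commutation and the direct-product decomposition with $\langle\sigma\rangle$ as a factor fail; your kernel $\ker\chi\supseteq\langle c, 2\,C\ell(\cO)\rangle$ handles all cases uniformly. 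What the paper's version buys, when its intermediate claims do hold, is an explicit splitting of the class group that dovetails with the genus-field case analysis driving Theorem~\ref{thm:nolinear}. Two small points worth a line in a final write-up: every homomorphism $C\ell(\cO)\to\bZ/2$ is inversion-invariant, so your extension of $\chi_1$ across the semidirect product really is a homomorphism; and $L=NK$ with $N\cap K=\bQ$ (from irreducibility of $P_{\cO}$ over $K$ and a degree count), which is what makes $c$ a reflection.
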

\begin{proof}
If there is a quadratic subextension of $N$ which is inert at $p$, then all factors of $p$ in $N$ have inertial degree $2$, and thus there can be no linear factors.

Conversely, suppose every factor of $p$ in $N$ has inertial degree $2$.
let $\fp$ be a prime of $L$ over $p$ and let $\sigma$ be a generator for the decomposition group of $\fp$ and  let $\tau$ be a generator of $\Gal(L/N)$.
Then 
\begin{itemize}
\item $\sigma$ is $2$-indivisible ($\sigma\neq x\cdot x$ for any $x$) with exact order $2$, because this is true of $\Frob_p$.
\item $\sigma$ and $\tau$ are not conjugate, since if $\tau$ were a conjugate of $\Frob_p$ the field $N=L^\tau$ would have a non-inert prime.
\item $\sigma$ and $\tau$ commute since $\sigma$ has order $2$.
\item $\sigma\tau$ is in $\Gal(L/K)$ as they both act non-trivially on $K$.
\item It follows from the above, and the basic structure of dihedral groups, that $\sigma\tau$ is indivisible with exact order $2$.
\end{itemize}
Thus we may write:
 \[ \Gal(L/K) = \langle \sigma\tau \rangle \times H \]
and thus
\[  \Gal(L/\bQ)= \langle \sigma \rangle \times (H\rtimes \langle \tau \rangle). \]
We see that $G=(H\rtimes \langle \tau \rangle)$ is a normal subgroup of $\Gal(L/\bQ)$, moreover, the field $L^G$ is an inert quadratic subextension of $N$.
\end{proof}

\begin{lemma}
The maximal subextension of $N$ generated by quadratic extensions is the totally real subfield $M$ of $F$ the genus field of $L$.
\end{lemma}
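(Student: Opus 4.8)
The statement to prove is that the maximal $2$-subextension of $N = \mathbb{Q}(j(\cO))$ equals $M$, the totally real subfield of the genus field $F$ of $L$. Recall the setup: $L/K$ is abelian with $\Gal(L/K) \simeq C\ell(\cO)$, and $L/\mathbb{Q}$ is generalized dihedral, with $\Gal(K/\mathbb{Q})$ acting on $C\ell(\cO)$ by inversion; $N$ is the fixed field of a subgroup of order $2$ generated by a "reflection" $\tau$ (a lift of complex conjugation that fixes one root $j(\cO)$). The genus field $F$ is the maximal $2$-subextension of $L$ that is (by definition, the way it enters here) abelian over $\mathbb{Q}$, so $F$ corresponds to the subgroup $\Gal(L/K)^2 \cdot [\Gal(L/\mathbb{Q}),\Gal(L/\mathbb{Q})]$; since the action is by inversion, $[\Gal(L/\mathbb{Q}),\Gal(L/\mathbb{Q})]$ is exactly the subgroup of squares $C\ell(\cO)^2$ inside $C\ell(\cO)$, so $\Gal(F/\mathbb{Q})$ is an elementary abelian $2$-group which is $C\ell(\cO)[2] \rtimes \langle \tau\rangle = C\ell(\cO)[2] \times \langle \bar\tau\rangle$ (the semidirect product splits and becomes direct because inversion is trivial on $2$-torsion). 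Then $M$, the totally real (i.e. real, i.e. fixed by complex conjugation $\tau$) subfield of $F$, is $F^{\langle\tau\rangle}$, corresponding to $C\ell(\cO)[2]$.

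First I would set up the group theory cleanly. Write $G = \Gal(L/\mathbb{Q})$, $A = \Gal(L/K) \simeq C\ell(\cO)$ (abelian), so $G = A \rtimes \langle c\rangle$ with $c$ of order $2$ acting by $a \mapsto a^{-1}$. Every reflection in $G$ (element of order $2$ outside $A$, or the element $c$ itself) is conjugate to $ac$ for various $a \in A$, and two such are $G$-conjugate iff they differ by an element of $A^2$; the fixed field of $\langle ac\rangle$ is a conjugate of $N$, so WLOG $N = L^{\langle c\rangle}$. The maximal $2$-subextension of $N$ over $\mathbb{Q}$ corresponds to the smallest normal subgroup $H \trianglelefteq G$ with $H \subseteq \langle c \rangle \cdot(\text{stuff})$... more precisely: a subextension $N'/\mathbb{Q}$ of $N/\mathbb{Q}$ is a $2$-extension iff $N' = L^{H}$ with $\langle c\rangle \subseteq H$, $H \trianglelefteq G$, and $G/H$ a $2$-group. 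I would argue the maximal such is $H_0 = \langle c \rangle \cdot A^2$: indeed $G/H_0 = A/A^2$ is an elementary abelian $2$-group, $H_0$ is normal (as $A^2$ is characteristic in $A$ hence normal in $G$, and $c$ normalizes it), and it contains $c$. Any larger $2$-quotient would need a normal subgroup contained in $H_0$ with $2$-power-index; but conversely I should check $H_0$ is minimal — actually I want the maximal $2$-subextension, so I want the minimal normal $H$ containing $c$ with $G/H$ a $2$-group, and the claim is $H_0$ is minimal among those, because any $H \supseteq \langle c\rangle$ normal with $G/H$ a $2$-group must contain $[G,G] = A^2$ (commutators of $G$: $[a,c] = a^{-2}$, and $A$ abelian), hence $H \supseteq \langle c\rangle A^2 = H_0$. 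Wait, that shows $H_0$ is the \emph{minimum}, so $L^{H_0}$ is the \emph{maximum} $2$-subextension. Good — so the maximal $2$-subextension of $N$ is $L^{\langle c\rangle A^2}$.

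Next I identify this with $M$. The genus field as it is used in the cited Halter–Koch / Schertz theorem is the maximal $2$-subextension of $L$, equivalently (by the same commutator computation) the field $F = L^{A^2}$, with $\Gal(F/\mathbb{Q}) \simeq (A/A^2) \rtimes \langle c\rangle$, and since $c$ acts trivially on $A/A^2$ this is $(A/A^2) \times \langle \bar c\rangle$. Its totally real subfield $M$ is the fixed field of complex conjugation, i.e. of $\bar c$, so $\Gal(M/\mathbb{Q}) \simeq A/A^2$ and $M = F^{\langle \bar c \rangle} = L^{\langle c\rangle A^2}$. This is exactly the field identified in the previous paragraph. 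Hence the maximal $2$-subextension of $N$ equals $M$. (One should note $N \supseteq M$ automatically: $\langle c\rangle \subseteq \langle c\rangle A^2$ so $N = L^{\langle c\rangle} \supseteq L^{\langle c\rangle A^2} = M$, consistent.)

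The main obstacle, and the point that needs care rather than ingenuity, is pinning down \emph{which} field "the genus field $F$" refers to and that "totally real subfield of $F$" is the same as "fixed field of complex conjugation" — I must make sure the normalization of $F$ used here (maximal $2$-subextension of $L/\mathbb{Q}$? maximal unramified $2$-extension? the field generated by the quadratic fields listed in the Halter–Koch theorem?) is compatible, and that complex conjugation is well-defined on $F$ as an element (it is, since $F/\mathbb{Q}$ is abelian, so all lifts of $c$ to $\Gal(F/\mathbb{Q})$ coincide — this uses that $F/\mathbb{Q}$ is abelian, which in turn is why $F \subseteq L$ can be taken to be $L^{A^2}$). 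A secondary subtlety is the clean identity $[G,G] = A^2$ and that the extension $1 \to A/A^2 \to \Gal(F/\mathbb{Q}) \to \langle c\rangle \to 1$ splits as a direct product — both follow immediately from $c$ acting by inversion hence trivially mod squares, but they should be stated. Everything else is bookkeeping in the lattice of subgroups of a generalized dihedral group.
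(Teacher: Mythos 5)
Your argument is correct, but it takes a genuinely different (and more explicit) route than the paper. You work entirely in the subgroup lattice of the generalized dihedral group $G=\Gal(L/\bQ)=A\rtimes\langle c\rangle$: normality of $H\ni c$ forces $a^{\pm 2}=[a,c]\in H$ for all $a\in A$, so the minimal normal subgroup containing $c$ with $2$-power index is $A^2\langle c\rangle$, and the maximal $2$-subextension of $N=L^{\langle c\rangle}$ is $L^{A^2\langle c\rangle}$; on the other side $F=L^{A^2}$ with $\Gal(F/\bQ)\simeq (A/A^2)\times\langle\bar c\rangle$ and $M=F^{\langle\bar c\rangle}=L^{A^2\langle c\rangle}$. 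The paper instead compresses everything into one observation: the maximal $2$-subextension of $N$ sits inside the multiquadratic field $F$, hence is totally real or totally complex, so it suffices to show $N$ has a real embedding, which follows from $\overline{j(\fa)}=j(\overline{\fa})$ applied to $\fa=\cO$. Two remarks on your version. First, the single arithmetic input both proofs need is precisely the realness of $j(\cO)$; in your write-up it is tucked into the ``setup'' as the assertion that $\Gal(L/N)$ is generated by a lift of complex conjugation, so you should justify it exactly as the paper does (complex conjugation permutes the roots by $j(\fa)\mapsto j(\overline{\fa})$ and $\overline{\cO}=\cO$); relatedly, not every reflection $ac$ has fixed field conjugate to $N$ (only those with $a$ in the correct $A^2$-coset), so the ``WLOG $N=L^{\langle c\rangle}$'' should be read as naming the generator of $\Gal(L/N)$ and then checking it is a complex conjugation, not the reverse. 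Second, your claim that a normal $H\supseteq\langle c\rangle$ with $G/H$ a $2$-group must contain $[G,G]$ is true here, but the correct reason is normality plus $c\in H$ (a $2$-group quotient need not be abelian). What your approach buys is a fully explicit identification of the field, and in particular the inclusion $M\subseteq N$, which the paper's one-line reduction leaves implicit; what the paper's buys is brevity and a clean isolation of the arithmetic content.
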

\begin{proof}
It suffices to show that $N$ has a real embedding since any composite of quadratic extensions is either totally complex or totally real.

To see this we use the fact that:
\[ \overline{j(\fa)} = j(\overline{\fa}).  \]
It is thus sufficient to find $\fa$ such that $\overline{\fa} = \fa$, but indeed we may simply take $\fa = \cO$.
\end{proof}

\begin{proof}[proof of Theorem \ref{thm:nolinear}]
The idea of the proof is to show that $p$ is inert in a quadratic subextension of the totally real subfield $N$ of $F$ if and only if one of the conditions of the theorem holds.

To show this we must find a subextension of $N$ defined by adjoining the square root of a positive integer which is not a square modulo $p$, in each of the following cases we describe how to find such a non-square.
Note that if $q=3\pmod{4}$ then $\sqrt{Dq} \in N$ whereas if $q=1\pmod{4}$ then $\sqrt{q}\in N$.
\begin{itemize}
\item
Consider the case where $p = 1 \pmod{  4}$ and $4 || D$. In this case there exists odd prime factor $q'$ of $D$ with $\left(\dfrac{-p}{q'}\right) = -1$.
Moreover, $D$ has a factor $q$ such that both $\pm q$ are not squares mod $p$.

\item Suppose there is an odd prime factor $q$ of $D\ff$ with $\left(\dfrac{-p}{q}\right) = -1$.
\begin{itemize}
 \item if $q=p=3\pmod 4$ we obtain $\left(\dfrac{q}{p}\right) = -1$ and thus $Dq$ is not a square mod $p$.
 \item if $q=3\pmod 4$, $p=1\pmod{4}$ and $2 \not\vert D$ we obtain $\left(\dfrac{q}{p}\right) = 1$ and thus $Dq$ is not a square mod $p$.
 \item if $q=1\pmod 4$ we obtain $\left(\dfrac{q}{p}\right) = -1$ and thus $q$ and is not a square mod $p$.
\end{itemize}

\item Suppose $p= 3 \pmod{  8} $ and $8|D $ and $D/8 = 3 \pmod 4$ then $D$ has a factor $d$ congruent to $3\pmod 4$ which is not a square mod $p$.
\item Suppose $p= 3 \pmod{  8} $ and $8|D$ and $D/8 = 1 \pmod 4$ then $2$ is not a square mod $p$.

\item Suppose $p=3\pmod{8}$ and $64 | D\ff^2$ then $2$ is not a square mod $p$.

\item Suppose $p=1\pmod{4}$ and $16 | D\ff^2$ then $D$ has a factor $q$ such that both $\pm q$ are not square mod $p$.

\end{itemize}
The above covers all of the cases of the theorem.

To prove the converse we remark that if $p$ is inert in $N$ it is inert in a quadratic subextension of one of the following types: 
\begin{itemize}
\item $\bQ(\sqrt{q})$ where $q | fD$ or 
\item $\bQ(\sqrt{q_1q_2})$ where both $q_1,q_2 = 3\pmod{4}$ and $q_1q_2|fD$.
\end{itemize}
as such fields generate the genus field of $N$.
Completing the proof follows a similar case analysis to the above.
\end{proof}

We now shift to discussing a phenomenon whereby certain $\bF_p$ reductions are disallowed based on the ramification behavior of $2$.
\begin{rmk}
In the following theorem we will be distinguishing the supersingular $j$-invariants in $\bF_p$ by identifying them as roots of $P_{\bZ[\sqrt{-p}]}(X)$ or $P_{\bZ[(1+\sqrt{-p})/2]}(X)$.

To understand the significance we recall the theorems above of Ibukiyama which asserted that this naturally divides the supersingular values into two almost disjoint sets.
More precisely, by \cite{Elkies1987} and \cite{kaneko1989} we have that for $p=3\pmod{4}$ these polynomials factor as $(X-1728)\prod_i (X-\alpha_i)^2$ whereas for $p=1\pmod{4}$ the factorization is $\prod_i (X-\alpha_i)^2$. In each case the $\alpha_i$ are distinct in $\bF_p$.
Furthermore, in the case $p=3\pmod{4}$ the $\alpha_i$ for $P_{\bZ[\sqrt{-p}]}(X)$ are distinct from those for $P_{\bZ[(1+\sqrt{-p})/2]}(X)$.
The polynomial for $\sqrt{-2}$ is precisely $P_{\bZ[\sqrt{-2}]}(X)=X-8000$.
\end{rmk}

\begin{thm}\label{thm:tworam}
Fix $K=\bQ(\sqrt{-D})$ of discriminant $-D$. Fix an order $\cO = \bZ + \ff\cO_K$ of conductor $\ff\in \bZ$ and suppose that $\left(\frac{-D\ff^2}{p}\right) = -1$.
Let $j$ be a $\bZ_p$ root of $P_{\cO}(X)$.
\begin{itemize}
\item Suppose $p=7\pmod{8}$ 
\begin{itemize}
\item If $2$ is unramified in $K$ and $2 \not\vert \ff$ then $j$ is a root of $P_{\bZ[\sqrt{-p}]}(X)$.
\item If $2$ is unramified in $K$ and $2 | \ff$ but $8 \not\vert\ff$ then $j$ is a root of $P_{\bZ[(1+\sqrt{-p})/2]}(X)$.
\item If $2$ is unramified in $K$ and $8 | \ff$ then $j$ is a root of $P_{\bZ[\sqrt{-p}]}(X)$ or $P_{\bZ[(1+\sqrt{-p})/2]}(X)$.
\item If $2$ is tamely ramified in $K$ and $2 \not\vert \ff$ or $4|\ff$ then $j$ is a root of $P_{\bZ[\sqrt{-p}]}(X)$ or $P_{\bZ[(1+\sqrt{-p})/2]}(X)$.
\item If $2$ is tamely ramified in $K$ and $2 || \ff$ then $j$ is a root of $P_{\bZ[(1+\sqrt{-p})/2]}(X)$.
\item If $2$ is wildly ramified in $K$ and $2 \not\vert \ff$ then $j$ is a root of $P_{\bZ[(1+\sqrt{-p})/2]}(X)$.
\item If $2$ is wildly ramified in $K$ and $2 | \ff$ then $j$ is a root of $P_{\bZ[\sqrt{-p}]}(X)$ or $P_{\bZ[(1+\sqrt{-p})/2]}(X)$.
\end{itemize}
\item Suppose $p=3\pmod{8}$ 
\begin{itemize}
\item If $2$ is unramified in $K$ and $2\not\vert\ff$ or $4||\ff$ then $j$ is a root of $P_{\bZ[\sqrt{-p}]}(X)$.
\item If $2$ is unramified in $K$ and $2 || \ff$ then $j$ is a root of $P_{\bZ[(1+\sqrt{-p})/2]}(X)$.
\item If $2$ is unramified in $K$ and $8 | \ff$ then there are no linear terms.
\item If $2$ is tamely ramified in $K$ and $2 \not\vert \ff$ then $j$ is a root of $P_{\bZ[\sqrt{-p}]}(X)$ or $P_{\bZ[(1+\sqrt{-p})/2]}(X)$.
\item If $2$ is tamely ramified in $K$ and $2 || \ff$ then $j$ is a root of $P_{\bZ[\sqrt{-p}]}(X)$.
\item If $2$ is tamely ramified in $K$ and $4 | \ff$ then there are no linear terms.
\item If $2$ is wildly ramified in $K$ then there are no linear terms.
\end{itemize}

\item Suppose $p=1\pmod{4}$
\begin{itemize}
\item If $2$ is unramified in $K$ and $4 \not\vert \ff$  then $j$ is a root of $P_{\bZ[\sqrt{-p}]}(X)$.
\item If $2$ is unramified in $K$ and $4 \vert \ff$ then there are no linear terms.
\item If $2$ is tamely ramified then there are no linear terms.
\item If $2$ is wildly ramified in $K$ and $2 \not\vert \ff$  then $j$ is a root of $P_{\bZ[\sqrt{-p}]}(X)$.
\item If $2$ is wildly ramified in $K$ and $2 \vert \ff$ then there are no linear terms.
\end{itemize}
\end{itemize}
\end{thm}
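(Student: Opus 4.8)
The plan is to combine Deuring's criterion (reduction to $\bF_p$ happens iff the ambient maximal order $\bB$ contains $\bZ[\sqrt{-p}]$) with a finer analysis at the prime $2$ of which of Ibukiyama's two families $O(p,q,r,m)$ or $O'(p,q,r',m')$ the order $\bB$ belongs to. By the remark preceding the theorem, membership in the first family corresponds to $j$ being a root of $P_{\bZ[\sqrt{-p}]}(X)$, and membership in the second to $j$ being a root of $P_{\bZ[(1+\sqrt{-p})/2]}(X)$; so what must be shown is that, for a $\bZ_p$-rational $j$ with CM by $\cO$, the local structure of $\cO$ at $2$ forces $\bB$ into one family, the other, or neither. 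The distinction between the two families is exactly whether $\bZ[(1+\sqrt{-p})/2]$ (not merely $\bZ[\sqrt{-p}]$) embeds — equivalently whether $(1+\alpha)/2 \in \bB$ — and this is a congruence condition at $2$.

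First I would fix the setup: assume $\left(\tfrac{-D\ff^2}{p}\right)=-1$ and that $P_\cO(X)$ has a $\bZ_p$-root $j$, so by Theorem \ref{thm:nolinear} we are in one of the allowed cases. By the Proposition of Dorman, $j$ corresponds to an optimal embedding $\cO \hookrightarrow \bB$ into a maximal order of the quaternion algebra ramified at $p$ and $\infty$; by Deuring's theorem the reduction is $\bF_p$-rational iff $\bB \supseteq \bZ[\sqrt{-p}]$, and if there are no linear factors then no such $\bB$ occurs, which will handle all the ``there are no linear terms'' cases. Next I would observe that an optimal embedding $\cO \hookrightarrow \bB$ extends $\bZ \hookrightarrow \bB$ and that $\cO \otimes \bZ_2$ depends only on the local conductor at $2$ and on whether $2$ is unramified, tamely ramified, or wildly ramified in $K$; one then asks which quadratic $\bZ_2$-orders $\cO\otimes\bZ_2$ can embed optimally into $\bB \otimes \bZ_2$, and into which of Ibukiyama's local orders. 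Since $\bB\otimes\bZ_2 \cong \Mat_2(\bZ_2)$ (as $2 \ne p$ and $\bB$ is unramified at $2$), the local embedding question is purely a question about $\Mat_2(\bZ_2)$ and the two specific sublattices that $O(p,q,r,m)\otimes\bZ_2$ and $O'(p,q,r',m')\otimes\bZ_2$ cut out; these differ by the presence of $(1+\alpha)/2$, hence by the value of $p \bmod 8$ together with which $\bZ_2$-orders contain a square root of $-p$ versus $(1+\sqrt{-p})/2$.

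The key steps, in order: (1) record the splitting behaviour of $2$ in each of $\bQ(\sqrt{-p})$ and $\bQ(\sqrt{-Dq})$ / $\bQ(\sqrt q)$ as $p$ runs through the residues $1,3,7 \bmod 8$, and note $\bZ[(1+\sqrt{-p})/2]$ makes sense exactly when $p \equiv 3 \pmod 4$; (2) for each combination of $p \bmod 8$ and the ramification type of $2$ in $K$ and the power of $2$ dividing $\ff$, determine the $\bZ_2$-isomorphism class of $\cO \otimes \bZ_2$; (3) determine, via optimal-embedding counts over $\bZ_2$ (Eichler/local theory, or direct matrix computation), whether $\cO\otimes\bZ_2$ embeds optimally into $O\otimes\bZ_2$ only, into $O'\otimes\bZ_2$ only, into both, or into neither; (4) translate ``neither'' into ``no $\bZ_p$-root of the relevant type'', which combined with Theorem \ref{thm:nolinear} gives the ``no linear terms'' conclusions, and translate the remaining outcomes into the stated root-membership assertions. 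The main obstacle I anticipate is step (3) in the wildly ramified and higher-conductor cases: when $2 \mid \ff$ and $2$ is wildly ramified, $\cO\otimes\bZ_2$ is a non-maximal, non-Gorenstein-looking $\bZ_2$-order and the local optimal-embedding numbers into the two Ibukiyama orders require care — one must check not just that an embedding exists but that it is optimal (the conductor is not enlarged), and the bookkeeping of which family absorbs it is exactly where the $2 \not\vert \ff$ versus $2 \| \ff$ versus $4 \mid \ff$ dichotomies in the statement come from. I would organize this as a table-driven case analysis, doing the generic (unramified, $2\nmid\ff$) case first to calibrate the correspondence, then the tame cases, then the wild cases, each reducing to a short computation in $\Mat_2(\bZ_2)$.
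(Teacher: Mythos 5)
There is a genuine gap, and it is the central one. The paper's proof of Theorem \ref{thm:tworam} does not rest on Deuring's criterion (that $\bB$ contain $\bZ[\sqrt{-p}]$) combined with a local embedding computation at $2$; it rests on the much stronger constraint of Lemma \ref{lem:frob}: because $E$ is defined over $\bZ_p$ and $p$ is inert in $K$, the Galois Frobenius acts on $\cO$ by complex conjugation, so the induced endomorphism $\widetilde{\Frob_p}=\alpha$ (with $\alpha^2=-p$) anticommutes with the trace-zero part of $\cO$, i.e.\ $\cO\subset\alpha^\perp$ inside $\bB$. Your proposal never imposes this perpendicularity; you only ask that $\cO$ and $\bZ[\sqrt{-p}]$ both embed optimally into the same maximal order, which is far too weak to produce the dichotomies in the statement.

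The step that fails concretely is your step (3). Since $\bB$ is ramified only at $p$ and $\infty$, we have $\bB\otimes\bZ_2\cong\Mat_2(\bZ_2)$, all maximal orders of $\Mat_2(\bQ_2)$ are conjugate, and every quadratic $\bZ_2$-order admits an optimal embedding into $\Mat_2(\bZ_2)$ (send a generator to its companion matrix). Hence $O(p,q,r,m)\otimes\bZ_2$ and $O'(p,q,r',m')\otimes\bZ_2$ are indistinguishable as local orders, and the local optimal-embedding count you propose is always positive for both families: it can never return ``one family only'' or ``neither,'' so none of the exclusions in the theorem would follow. What the paper actually does is write down, from Ibukiyama's explicit bases, the primitive vectors of $\bZ[\sqrt{-p}]^\perp$ in each family and compute the binary quadratic forms giving their squares ($-y^2q-z^2m-2yzr$ for $O$, resp.\ $-y^2q-z^2m'-yzr'$ for $O'$); a (trace-adjusted) generator of $\cO$ must be such a vector, so the discriminant data of $\cO$ must be represented by one of these forms, and reducing them modulo $8$ is exactly what yields the case analysis in $p\bmod 8$, the ramification of $2$ in $K$, and $\ord_2(\ff)$. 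If you add the perpendicularity constraint and replace the local embedding count by this quadratic-form computation, your outline becomes the paper's proof; without it, the argument does not get off the ground.
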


\begin{nota}
Given that any quaternion algebra $A$ is equipped with a canonical bilinear form, given an element $\alpha\in A$ we shall denote by $\alpha^\perp$ the collection of all elements in $A$ perpendicular to $\alpha$, that is elements $x\in A$ with $\Tr_(\alpha x) = 0$.

Similarly, given a subspace such as $\cO\subset A$, we shall denote $\cO^\perp$, the complementary subspace of $A$ with respect to this pairing.
\end{nota}

To prove this we will make use of the following lemma.
\begin{lemma}\label{lem:frob}
If $E$ is an elliptic curve over $\bZ_p$ which admits CM by $\cO$ which corresponds to a datum $(\cO \subset \bB)$ then the Galois Frobenius $\Frob_p$ acting on $E(\overline{\bQ}_p)$ over $\bZ_p$ induces the endomorphism Frobenius $\widetilde{\Frob_p}$ of $\overline{E}$. 
Moreover we have:
\begin{itemize}
\item $\Frob_p$, the Galois action of Frobenius on $E$, acts on $\cO$ by $x\mapsto \overline{x}$.
\item $\widetilde{\Frob_p}$, the endomorphism of $\overline{E}$,  satisfies $\widetilde{\Frob_p} x = \overline{x} \widetilde{\Frob_p}$ for $x\in \cO$.
\item $\Frob_p^2$, the Galois action of Frobenius on $E$, commutes with $\cO$.
\item $\widetilde{\Frob_p}^2$, the endomorphism of $\overline{E}$, satisfies $\widetilde{\Frob_p}^2 = -p$.
\end{itemize}
In particular $\widetilde{\Frob_p} \in \cO^\perp$ is an element of norm $p$.
\end{lemma}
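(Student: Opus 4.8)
The plan is to read this lemma as the Frobenius half of Deuring's dictionary relating a CM lift to its associated optimal embedding, and to assemble it from standard facts about good reduction. I would begin by fixing the setup: $E$ is an elliptic scheme over $\bZ_p$, so it has good reduction, and since $p$ is inert in $K$ its special fibre $\overline{E}/\bF_p$ is supersingular with $\End(\overline{E})=\bB$ a maximal order in the quaternion algebra ramified at $p,\infty$, the reduction map on endomorphisms realising the optimal embedding $\cO\hookrightarrow\bB$ of the datum $(\cO\subset\bB)$ attached to $E$. Write $\widetilde{\Frob_p}\in\bB$ for the $p$-power Frobenius endomorphism of $\overline{E}/\bF_p$; it is an isogeny of degree $p$, so its reduced norm is $p$. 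That settles the ``element of norm $p$'' clause once $\widetilde{\Frob_p}\in\cO^\perp$ is established.

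Next I would pin down the Galois action of $\Frob_p$ on $\cO=\End_{\overline{\bQ}_p}(E)$. Since $E$ has good reduction this action is unramified (by the usual Serre--Tate argument the endomorphisms are already defined over $\bQ_p^{\mathrm{nr}}$), and it is by ring automorphisms of the order $\cO$ fixing $\bZ$, hence is either $1$ or $x\mapsto\overline{x}$. To exclude the trivial case I would note that triviality would mean $K$ acts on $E$ over $\bQ_p$ itself; acting on $\Lie(E/\bQ_p)\cong\bQ_p$ this would produce an embedding $K\hookrightarrow\bQ_p$, impossible as $p$ is inert in $K$. Hence $\Frob_p$ acts on $\cO$ by $x\mapsto\overline{x}$ (so in particular $\cO$ is defined over $\bZ_{p^2}$), and $\Frob_p^2$ acts trivially on $\cO$, i.e.\ commutes with it --- this is the first and third bulleted assertions.

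For the rest I would invoke two standard facts: (i) for good reduction, the reduction map on prime-to-$p$ torsion $E(\overline{\bQ}_p)[m]\xrightarrow{\sim}\overline{E}(\overline{\bF}_p)[m]$ is an isomorphism equivariant for the $\Gal(\overline{\bQ}_p/\bQ_p)$-action on the source and the $\Gal(\overline{\bF}_p/\bF_p)$-action on the target, and compatible with reduction of endomorphisms; and (ii) on $\overline{E}$ the geometric Frobenius of $\bF_p$ acts as the endomorphism $\widetilde{\Frob_p}$, which satisfies $\widetilde{\Frob_p}\psi=\psi^{\sigma}\widetilde{\Frob_p}$ for every endomorphism $\psi$ of $\overline{E}$, $\psi^{\sigma}$ being the Frobenius twist. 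Together these give the opening claim that $\Frob_p$ induces $\widetilde{\Frob_p}$. Then, for $x\in\cO$ and a prime-to-$p$ torsion point $P$, semilinearity gives $\Frob_p(xP)=x^{\Frob_p}(\Frob_p P)=\overline{x}(\Frob_p P)$; reducing and using the dictionary yields $\widetilde{\Frob_p}\,xQ=\overline{x}\,\widetilde{\Frob_p}\,Q$ for all prime-to-$p$ torsion $Q$ of $\overline{E}$, hence $\widetilde{\Frob_p}x=\overline{x}\,\widetilde{\Frob_p}$ in $\bB$ since an isogeny is determined by its action on prime-to-$p$ torsion --- the second bullet. From this $\widetilde{\Frob_p}$ normalizes but does not centralize $K=\cO\otimes\bQ\subset\bB\otimes\bQ$, so $\widetilde{\Frob_p}\notin K$ (the centralizer of $K$ in the quaternion division algebra being $K$); then $\widetilde{\Frob_p}^2$ centralizes $K$, hence lies in $K$, and is fixed by conjugation by $\widetilde{\Frob_p}$, which acts as $x\mapsto\overline{x}$ on $K$, so $\widetilde{\Frob_p}^2\in\bQ$; comparing reduced norms gives $\widetilde{\Frob_p}^2=\pm p$, and since $\bB\otimes\bQ$ is ramified at $\infty$ it has no real quadratic subfield, forcing $\widetilde{\Frob_p}^2=-p$ (equivalently $a_p=0$ for the supersingular $\overline{E}/\bF_p$). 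Finally, $\widetilde{\Frob_p}x=\overline{x}\,\widetilde{\Frob_p}$ sends each $x\in K$ into $K\widetilde{\Frob_p}$, whose elements have reduced trace $0$, so $\widetilde{\Frob_p}$ is orthogonal to $K\supseteq\cO$ for the norm form, i.e.\ $\widetilde{\Frob_p}\in\cO^\perp$.

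The main obstacle I expect is not any single step but keeping the Frobenius bookkeeping exactly consistent: reconciling the Galois Frobenius (arithmetic versus geometric) with the Frobenius endomorphism and its twisting relation $\widetilde{\Frob_p}\psi=\psi^{\sigma}\widetilde{\Frob_p}$, so that ``$\Frob_p$ acts on $\cO$ by $x\mapsto\overline{x}$'' and ``$\widetilde{\Frob_p}x=\overline{x}\,\widetilde{\Frob_p}$'' come out with the same, not the opposite, twist. Everything else is either cited (good reduction and the $\ell$-adic comparison, supersingularity of the reduction, the CM curve/optimal-embedding correspondence) or routine manipulation inside the quaternion algebra.
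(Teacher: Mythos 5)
The paper does not actually prove this lemma --- the ``proof'' consists of the single line ``See \cite{SchertzCM}'', so there is no argument in the text to compare yours against. Your self-contained derivation is correct and assembles exactly the standard ingredients one would expect Schertz to supply: good reduction identifies prime-to-$p$ torsion Galois-equivariantly with the special fibre's torsion, the endomorphisms of a CM curve are defined over the compositum $\bQ_p K=\bQ_{p^2}$ so the Galois action on $\cO$ factors through $\Gal(\bQ_{p^2}/\bQ_p)$ and is nontrivial by the $\Lie(E/\bQ_p)$ argument (which does use that $p$ is inert, so $K\not\hookrightarrow\bQ_p$), the twisting relation $\widetilde{\Frob_p}\psi=\psi^{\sigma}\widetilde{\Frob_p}$ gives the semilinearity, and the quaternion-algebra manipulation pins down $\widetilde{\Frob_p}^2=-p$ and the orthogonality $\mathrm{Trd}(x\widetilde{\Frob_p})=0$ for $x\in K$. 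Two small remarks. First, the arithmetic-versus-geometric Frobenius worry you flag is in fact harmless here: the twist appearing in the relation is $\sigma^{\pm1}$ restricted to $\bF_{p^2}$, and since complex conjugation is an involution, $x^{\sigma}=x^{\sigma^{-1}}=\overline{x}$ either way, so no sign ambiguity survives into the stated conclusions. Second, the appeal to ``Serre--Tate'' for the fact that the endomorphisms are defined over an unramified extension is an over-citation; the elementary statement that all endomorphisms of a CM elliptic curve in characteristic $0$ are defined over the compositum of the base field with the CM field suffices and is what you actually use.
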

See \cite{SchertzCM}.

\begin{proof}[proof of Theorem \ref{thm:tworam}]
We must show, using Ibukiyama's classification of maximal orders containing $\sqrt{-p}$, that the only CM-orders in $\alpha^\perp$ are those satisfying the conditions of the theorem.

We note that in selecting the values of $q$, $r$ and $m$ we may assume by replacing $r$ by $r+aq$ that $8|r$.
With this assumption we have that $pq = m \pmod{8}$.
When selecting $q$, $r'$ and $m'$ we must have that $r'$ is odd, when $p=3\pmod 8$ this then implies that $m$ is odd.

We observe the following important facts about $\alpha^\perp$ in the various cases:
\begin{enumerate}
\item For the maximal orders of the form $O'(p,q,r',m')$ we have that $\alpha^\perp$ contains no elements with odd trace.

\item For the maximal orders of the form $O'(p,q,r',m')$ we have that all primitive elements of $\bZ[\sqrt{-p}]^\perp$ are of the form:
 \[ y\beta + z\frac{(r'+\alpha)\beta}{2q} \]
for some choice of $y$ and $z$ coprime.

The square of such an element is:
\[ -y^2q -z^2m - yzr'. \]
Notice that if $p = 3\pmod8$ this cannot be even.
\item For the maximal orders of the form $O(p,q,r,m)$ we have that all primitive elements of odd trace in $\bZ[\sqrt{-p}]^\perp$ are of the form:
 \[ y\beta + z\frac{(r+\alpha)\beta}{q} \]
for some choice of $y$ and $z$ coprime, with $z$ odd.

The square of such an element is:
\[ -y^2q -z^2m - 2yzr \]
modulo $8$ this becomes:
\[ -q(y^2-z^2p). \]
Notice that if this is odd, then $y$ is even and $-q(y^2-z^2p) = pq \pmod{8}$.
Also, if it is even then $y$ and $z$ are both odd and it is divisible by $(1-p) | -q(y^2-z^2p)$.
\end{enumerate}
By considering each of the cases of the theorem, the above allows us to conclude the result.
\end{proof}

\begin{prop}\label{prop:hasroots}
Suppose there exists $\bZ[\sqrt{-D}] = \cO \subset \alpha^\perp$, then $P_\cO(X)$ has $\bZ_p$ roots.
\end{prop}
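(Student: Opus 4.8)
The plan is to turn the hypothesis into the datum of an honest characteristic-$0$ CM elliptic curve and then to force its $j$-invariant into $\bZ_p$; in effect this is the converse of Lemma~\ref{lem:frob}. By hypothesis we are given a maximal order $\bB$ of the quaternion algebra ramified at $p$ and $\infty$ (one of Ibukiyama's orders), containing $\alpha$ with $\alpha^2=-p$, together with an optimal embedding $\iota\colon\cO\hookrightarrow\bB$ with $\iota(\cO)\subset\alpha^\perp$, i.e.\ with $\iota(\sqrt{-D})$ anticommuting with $\alpha$. By the bijection recalled above (the Proposition following Deuring's theorem; see \cite{Dorman1989global}) the datum $(\cO\subset\bB)$ corresponds to a characteristic-$0$ elliptic curve $E$ with CM by $\cO$ whose supersingular reduction $\overline E$ has $\End(\overline E)=\bB$, with $\iota$ the reduction of the CM. Since $\sqrt{-p}=\alpha\in\bB$, Deuring's theorem shows $\overline E$ has a model over $\bF_p$; fix the model whose Frobenius endomorphism, viewed in $\End(\overline E)=\bB$, is $\alpha$. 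Finally, since $p$ is inert in $K$ and $p\nmid\ff$, the prime $p$ splits in $L/K$ and $L/\bQ$ is unramified at $p$, so by the standard reduction theory of CM elliptic curves $E$, together with its CM, may be taken over $\bZ_{p^2}$ with good reduction $\overline E$.

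Next I would compute the action of $\Gal(\bZ_{p^2}/\bZ_p)=\langle\sigma\rangle$ on $E$. The conjugate $E^\sigma$ again has CM by $\cO$; its reduction is the Frobenius twist $\overline E^{(p)}$, and since reduction commutes with Frobenius twist while the chosen $\bF_p$-model identifies $\overline E^{(p)}$ with $\overline E$ in such a way that Frobenius twist of an endomorphism becomes conjugation by $\alpha$, the reduction of the CM of $E^\sigma$, transported into $\End(\overline E)=\bB$, is the embedding $x\mapsto\alpha\,\iota(x)\,\alpha^{-1}$. (This is the same compatibility between the Galois and endomorphism Frobenii recorded in Lemma~\ref{lem:frob}; cf.\ \cite{SchertzCM},\cite{Deuring}.) The orthogonality hypothesis now enters: as $\iota(\sqrt{-D})$ anticommutes with $\alpha$, we have $\alpha\,\iota(\sqrt{-D})\,\alpha^{-1}=-\iota(\sqrt{-D})$, so $\alpha\iota\alpha^{-1}=\overline\iota$, where $\overline\iota\colon x\mapsto\iota(\bar x)$ is the complex-conjugate embedding of $\cO$. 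Thus $E^\sigma$ corresponds to the datum $(\bB,\overline\iota)$.

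Now an elliptic curve with CM by $\cO$ carries precisely the two optimal embeddings $\iota$ and $\overline\iota$ of $\cO$ into its endomorphism ring (they differ by composition with the nontrivial automorphism of $\cO$, and when $j(E)\neq0,1728$ these are the only two since $\Aut(E)=\{\pm1\}$). Hence $(\bB,\iota)$ and $(\bB,\overline\iota)$ are the two CM-data carried by one and the same elliptic curve, which must be $E$. Therefore $E^\sigma\cong E$ as elliptic curves, so $\sigma\bigl(j(E)\bigr)=j(E^\sigma)=j(E)$ and $j(E)\in\bZ_p$. Since $E$ has CM by $\cO$ in characteristic $0$, choosing an embedding $\overline{\bQ}_p\hookrightarrow\bC$ gives $j(E)=j(\bC/\fa)$ for some $\fa\triangleleft\cO$, so $j(E)$ is a root of $P_\cO(X)$; hence $P_\cO(X)$ has a root in $\bZ_p$.

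The main obstacle is the identification, in the second paragraph, of the reduced CM of $E^\sigma$ with the $\alpha$-conjugate of $\iota$: this is where the arithmetic content lives, being the precise converse of Lemma~\ref{lem:frob} and resting on Deuring's lifting theory together with the compatibility of reduction with both notions of Frobenius. A secondary point to dispatch separately is the case $j(E)\in\{0,1728\}$, where $\overline E$ has extra automorphisms and $\bB$ can be one of the exceptional orders $O'(p,q,r',m')$ which Ibukiyama allows to coincide with an $O(p,q,r,m)$ only there; that case should be handled directly from Ibukiyama's explicit description.
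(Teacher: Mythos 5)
Your argument takes a completely different route from the paper's, and it contains a genuine gap. The paper's proof is elementary: the hypothesis $\cO\subset\alpha^\perp$ is translated into a representation $qD=z^2p+(yq+rz)^2$ (or $4qD=z^2p+(2yq+rz)^2$), and reducing these identities modulo $8$ and modulo the odd prime factors of $D$ shows that none of the obstructions of Theorem \ref{thm:nolinear} can occur, so $P_\cO(X)$ has $\abs{C\ell(\cO)[2]}>0$ linear factors over $\bZ_p$. You instead try to prove the much stronger statement that the \emph{particular} curve $E$ attached to the datum $(\cO\subset\bB)$ satisfies $j(E)\in\bZ_p$, i.e.\ the converse of Lemma \ref{lem:frob}. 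The paper explicitly disclaims this: the remark immediately following the proposition states that the argument ``does not actually prove the converse to Lemma \ref{lem:frob}'', and Question 1 of Section \ref{sec:cq} poses precisely the sufficiency you assert as an open problem.

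The gap is located at ``fix the model whose Frobenius endomorphism, viewed in $\End(\overline E)=\bB$, is $\alpha$'', and it propagates into the conclusion ``which must be $E$''. The hypothesis supplies \emph{some} $\alpha\in\bB$ with $\alpha^2=-p$ and $\iota(\cO)\subset\alpha^\perp$; but a maximal order $\bB$ generally contains several pairwise non-conjugate square roots of $-p$ --- this is exactly why Ibukiyama's count of pairs $(\bB,\,\bZ[\sqrt{-p}]\hookrightarrow\bB)$ is governed by $h_p$ while the count of maximal orders is governed by the number of supersingular $j$-invariants, and why distinct $\bF_p$-rational supersingular curves can share one abstract $\bB$. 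The curve $\overline E$ is pinned down by the datum $(\bB,\iota)$ (it is the reduction of the Deuring lift), and its $\bF_p$-structure realizes one specific conjugacy class of square roots of $-p$ as its Frobenius; nothing in the hypothesis forces that class to be the class of your $\alpha$, nor forces the actual Frobenius $\alpha'$ to be perpendicular to $\iota(\cO)$. If $\alpha'$ fails to anticommute with $\iota(\sqrt{-D})$, then $\alpha'\iota\alpha'^{-1}\neq\overline\iota$, so $E^\sigma$ corresponds to a datum other than $(\bB,\overline\iota)$ and your identification $E^\sigma\cong E$ collapses. (Granting that compatibility, the remaining step --- that $(E,\overline\rho)$ and $(E^\sigma,\rho^\sigma)$ both lift $(\overline E,\overline\iota)$ and hence agree by uniqueness of the Deuring lift --- would be sound; but that compatibility is exactly the content of Question 1(b).) Note also that your parenthetical justification, that a curve ``carries precisely the two optimal embeddings $\iota$ and $\overline\iota$'', counts isomorphisms $\cO\to\End(E)$ in characteristic $0$, whereas the objects classified by the bijection are optimal embeddings $\cO\hookrightarrow\bB$, of which there are on the order of $h(\cO)$ in total; this count cannot by itself force the datum $(\bB,\overline\iota)$ back onto the same curve $E$.
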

\begin{proof}
By the above argument we note that $\cO \subset \alpha^\perp$ implies the existence of a solution to:
\[ y^2q + z^2m +2yzr = D \qquad\text{ or }\qquad y^2q +z^2m + yzr' = D.  \]
In the first case, multiplying by $q$ we obtain:
\[qD = y^2q^2 + z^2(p+r^2) + 2yzrq =  z^2p + (yq+rz)^2. \]
reducing modulo $8$ and modulo all the odd prime factors of $D$ the result then follows from Theorem \ref{thm:nolinear}.
In the second case, multiplying by $4q$ we obtain:
\[ 4qD = 4y^2q^2 + z^2(p+r^2) + 2yzrq =  z^2p + (2yq+rz)^2 \]
and the result follows similarly.
\end{proof}

\begin{rmk}
Note that the above does not actually prove the converse to Lemma \ref{lem:frob} though it would provide for an alternate proof for one direction of Theorem \ref{thm:nolinear}.
\end{rmk}

We now explain the phenomenon where in specific circumstances certain $\bF_p$ reductions always occur with the same frequency.
Based on \cite{CornutVatsal2} we should expect that this is caused by systematic collections of isogenies (coming from Hecke relations), and in our case we should expect $2$-isogenies to play a role.
The results here have a similar flavor to those of \cite[pp. 95-96]{BrillMorClassNumbers} where they consider similar questions questions related to the orders $\bZ[\sqrt{-p}]$ and $\bZ[\frac{1+\sqrt{-p}}{2}]$.

\begin{lemma}\label{lem:qchoice}
If $\sqrt{q_2} \in  \alpha^\perp$ then $\cO \simeq O(p,q_2,r,m)$ or $O(p,q_2,r',m')$ for some choice of $r,m$ or $r',m'$.
\end{lemma}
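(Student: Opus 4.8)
The plan is to put the ambient quaternion algebra into the standard presentation attached to $q_2$ and then to quote Ibukiyama's classification verbatim, with the parameter $q$ replaced by $q_2$. Throughout, $\cO$ denotes the maximal order in $\bB$ equipped with the distinguished element $\alpha$, $\alpha^2 = -p$, realizing the embedding $\bZ[\sqrt{-p}]\hookrightarrow\cO$, $\sqrt{-p}\mapsto\alpha$. The hypothesis ``$\sqrt{q_2}\in\alpha^\perp$'' furnishes a pure quaternion $\gamma\in\cO$ with $\gamma^2 = -q_2$ and $\gamma$ orthogonal to $\alpha$ for the reduced norm form. Since $\alpha$ and $\gamma$ are both pure, orthogonality is the same as $\gamma\alpha = -\alpha\gamma$, so $1,\alpha,\gamma,\alpha\gamma$ is a $\bQ$-basis of $\bB$ realizing an isomorphism $\bB\simeq(-p,-q_2)$ carrying $\alpha$ and $\gamma$ to the distinguished generators used to define $O(p,q_2,r,m)$ and $O'(p,q_2,r',m')$.

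I would then invoke the theorem of Ibukiyama recalled above, but with $q_2$ in place of $q$: every maximal order of $(-p,-q_2)$ carrying an embedding of $\bZ[\sqrt{-p}]$ that sends $\sqrt{-p}$ to $\pm\alpha$ is isomorphic, compatibly with that embedding, to $O(p,q_2,r,m)$ or $O'(p,q_2,r',m')$ for suitable $r,m$ or $r',m'$. Applying this to the pair $(\cO,\alpha)$ constructed in the first step yields $\cO\simeq O(p,q_2,r,m)$ or $O'(p,q_2,r',m')$, which is the assertion.

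The one point that needs care — and the only real obstacle — is that $q_2$ must actually lie in the family of primes for which Ibukiyama's normal forms are set up: it should be prime, which is exactly what the notation ``$\sqrt{q_2}$'' encodes since it pins down $\Nm(\gamma)=q_2$, and one arranges $q_2\equiv 3\pmod 8$ as in the notation block defining $O(p,q,r,m)$ and $O'(p,q,r',m')$, this being part of the standing hypotheses under which the lemma is applied. Granted this, the condition that $(-p,-q_2)$ be ramified precisely at $p$ and $\infty$ is automatic, since $\alpha$ and $\gamma$ already span $\bB$, so the ambient algebra is literally the one appearing in Ibukiyama's theorem; no computation beyond the change-of-presentation bookkeeping is then required, and which of the two normal forms occurs is dictated by Ibukiyama's invariants and need not be determined here.
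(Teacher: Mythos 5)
There is a genuine gap at the step where you invoke Ibukiyama's classification ``with $q_2$ in place of $q$.'' For a fixed $q$, that theorem does not assert that every maximal order of $\bB$ carrying an embedding of $\bZ[\sqrt{-p}]$ is isomorphic to $O(p,q,r,m)$ or $O'(p,q,r',m')$ for \emph{that particular} $q$: the parameter $q$ is exactly the invariant that separates the $(h_p+1)/2$ (resp.\ $(\tilde{h}_p+1)/2$) isomorphism classes, and the correct quantifier in the classification is ``for some $q$.'' Your argument uses the hypothesis only to manufacture a presentation $\bB\simeq(-p,-q_2)$ as a $\bQ$-algebra; such a presentation exists for many $q_2$ regardless of whether a square root of $-q_2$ lies in the particular lattice $\cO$, and after your first sentence the integrality $\gamma\in\cO$ is never used again. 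Taken at face value, the argument would show that every maximal order containing $\sqrt{-p}$ is isomorphic to $O(p,q_2,\ldots)$ for every admissible $q_2$, i.e.\ that all such orders are isomorphic — false in general. (The side remark that primality of $q_2$ and $q_2\equiv 3\pmod 8$ are ``automatic'' or ``standing hypotheses'' is also not right: in the application in Lemma \ref{lem:3mod4cong}, $q_2$ is a discriminant-like quantity, not a prime of Ibukiyama's shape, and the conclusion has to be interpreted through the isomorphism criterion rather than through a literal normal form.)

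What is needed, and what the paper does, is to turn the integrality of $\gamma$ into arithmetic. Writing $\gamma$ in Ibukiyama's $\bZ$-basis of $\cO=O(p,q_1,r,m)$ (or $O'(p,q_1,r',m')$) and imposing $\gamma^2=-q_2$ produces, exactly as in the proof of Proposition \ref{prop:hasroots} with $D$ replaced by $q_2$, an integral representation $q_1q_2=z^2p+(yq_1+rz)^2$ (resp.\ $4q_1q_2=z^2p+(2yq_1+rz)^2$). Ibukiyama's Proposition 2.1 and Remark 2.2 state that precisely this representability is the criterion for $O(p,q_1,\ldots)\simeq O(p,q_2,\ldots)$ (resp.\ for the primed orders), which yields the lemma. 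If you want to rescue the change-of-presentation idea, you would have to verify in addition that the lattice $\cO$, rewritten in the basis $1,\alpha,\gamma,\alpha\gamma$, really contains the elements $\frac{1+\gamma}{2}$, $\frac{\alpha(1+\gamma)}{2}$, $\frac{(r+\alpha)\gamma}{q_2}$ defining the normal form; that verification is where the content lies, and it reduces to the same Diophantine condition.
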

\begin{proof}
By \cite[Prop 2.1 and Rmk 2.2]{Ibukiyama} the conditions:
\[q_1q_2 =  z^2p + (yq_1+rz)^2 \qquad \text{or} \qquad 4q_1q_2  = z^2p + (2yq_1+rz)^2 \]
imply that $q_1$ and $q_2$ satisfy 
\[ O(p,q_1,r_1,m_1) \simeq O(p,q_2,r_2,m_2) \text{ or respectively }  O'(p,q_1,r_1',m_1') \simeq O'(p,q_2,r_2',m_2').\]
 The results then follow from the proof of Proposition \ref{prop:hasroots}.
\end{proof}

\begin{lemma}\label{lem:3mod4cong}
Fix $p=3\pmod{4}$.
Fix $K=\bQ(\sqrt{-D})$ of discriminant $-D$. Fix an order $\cO = \bZ + \ff\cO_K$ and suppose that $\left(\frac{-D\ff^2}{p}\right) = -1$.
Suppose further that $2$ is tamely ramified in $K$ but $2$ does not divide $\ff$.

Suppose that $\cO$ is optimally embedded in $O(p,q,r,m)$ and contained in $\alpha^\perp$.
Let $\fa^2 = (2)$ in $\cO$.
Then $\fa O(p,q,r,m) \fa^{-1} \simeq O'(p,q,r',m')$ is a maximal order with an optimal embedding of $\cO$.
Consequently, if $E$ is an elliptic curve over $\bZ_p$ which admits CM by $\cO$  whose reduction has endomorphism ring $O(p,q,r,m)$, then the reduction of $\fa\ast E$ has endomorphism ring $O'(p,q,r',m')$ with the exact same choice of $q$.

Conversely, if $E$ is an elliptic curve over $\bZ_p$ which admits CM by $\cO$ whose reduction has endomorphism ring $O'(p,q,r',m')$, then the reduction of $\fa\ast E$ has endomorphism ring $O(p,\tilde{q},\tilde{r},\tilde{m})$ for some $\tilde{q}$ such that $O'(p,q,r',m')\simeq O'(p,\tilde{q},\tilde{r}',\tilde{m}')$.
\end{lemma}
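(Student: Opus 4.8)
The plan is to work entirely on the quaternion side, exploiting the dictionary between $\bZ_p$-curves with CM by $\cO$ and optimal embeddings $\cO \subset \bB$ into a maximal order with $\bZ[\sqrt{-p}]$-structure (Deuring's theorem together with Lemma \ref{lem:frob}), and the compatibility of the ideal action: $\fa \ast E$ reduces to the curve whose endomorphism ring is $\fa \bB \fa^{-1}$, with the same optimal embedding of $\cO$ transported by conjugation by $\fa$. Since $\widetilde{\Frob_p}$ lands in $\cO^\perp$ and $\cO \subset \alpha^\perp$ forces $\alpha$ and $\widetilde{\Frob_p} = \pm\alpha$ to play the same role, conjugating by $\fa$ sends $\alpha \mapsto \fa \alpha \fa^{-1}$, and I would first check that $\fa \alpha \fa^{-1}$ is again (up to sign) a generator of a $\bZ[\sqrt{-p}]$ inside $\fa\bB\fa^{-1}$; this is where the Ibukiyama classification re-enters, since $\fa\bB\fa^{-1}$ must then be one of $O(p,q',r',m')$ or $O'(p,q',r',m')$.

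The key computation is a trace/parity argument. First I would record that $\fa$, the ideal above $2$ in the tamely ramified order $\cO = \bZ + \ff\cO_K$ with $2\nmid \ff$, satisfies $\fa^2 = (2)$ and hence, viewed inside $\bB$, that conjugation by $\fa$ rescales by $2$ on the relevant lattices; concretely $\fa$ acts on $\cO^\perp = \alpha^\perp$ and halves/doubles denominators in a controlled way. Then I would invoke the structural facts isolated in the proof of Theorem \ref{thm:tworam}: the orders $O'(p,q,r',m')$ are exactly those for which $\alpha^\perp$ contains \emph{no} elements of odd trace, while the $O(p,q,r,m)$ do contain primitive elements of odd trace in $\bZ[\sqrt{-p}]^\perp$. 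The point is that conjugating by $\fa$ toggles this parity: an element $\gamma \in \cO \subset \alpha^\perp$ of odd trace is carried to $\fa\gamma\fa^{-1}$, whose trace acquires a factor of $2$ because of the $\fa^2 = (2)$ scaling (for the forward direction, starting from $O(p,q,r,m)$), and conversely the "no odd trace" order $O'(p,q,r',m')$ is sent to one admitting odd traces, i.e. an $O(p,\tilde q,\tilde r,\tilde m)$.

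Next I would pin down that the value of $q$ is preserved in the forward direction. Here I would use Lemma \ref{lem:qchoice} (and \cite[Prop 2.1, Rmk 2.2]{Ibukiyama}): the hypothesis that $\cO$, hence $\sqrt{-D}$ up to the lattice, sits in $\alpha^\perp$ together with $\cO \cong O(p,q,r,m)$ yields a representation $qD = z^2 p + (yq + rz)^2$ (from the proof of Proposition \ref{prop:hasroots}); conjugating by $\fa$ changes $r,m$ (and $4q_1 q_2$ vs $q_1 q_2$ in the criterion of Lemma \ref{lem:qchoice}) but leaves $q$ fixed, because the same prime $q = q_2$ continues to satisfy the quadratic form identity. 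For the converse direction, $q$ need not be preserved on the nose — applying $\fa$ twice returns us to $(2)\ast E \cong E$ up to isomorphism, so the composite must land back in the $O'$-family, which forces $O'(p,q,r',m') \cong O'(p,\tilde q,\tilde r',\tilde m')$; this is exactly the stated weaker conclusion.

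The main obstacle I anticipate is the bookkeeping of the wild/tame behavior of $2$ and of denominators: one must verify carefully that conjugation by $\fa$ sends the explicit $\bZ$-basis of $O(p,q,r,m)$ to the explicit $\bZ$-basis of some $O'(p,q,r',m')$ (not merely to an abstractly isomorphic maximal order), and that under this identification the fixed embedding of $\cO$ is the optimal one — the optimality is the subtle point, since conjugation automatically preserves embeddings but one must rule out that the image embedding factors through a larger order. I expect this to come down to checking that $[\,\fa\bB\fa^{-1} \cap K : \cO\,] = 1$ using $\fa^2 = (2)$, $2 \nmid \ff$, and the tame ramification hypothesis, together with the parity dichotomy above. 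Everything else is the quadratic-form manipulation already rehearsed in the proofs of Proposition \ref{prop:hasroots} and Lemma \ref{lem:qchoice}.
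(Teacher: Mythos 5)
Your setup is right, and you have correctly identified both the dictionary (ideal action on curves corresponds to $\fa\bB\fa^{-1}$ on the quaternion side) and the invariant that separates the two Ibukiyama families (the orders $O'(p,q,r',m')$ have no odd-trace elements in $\alpha^\perp$, while the $O(p,q,r,m)$ contain $\tfrac{1+\beta}{2}$). But the central step of your argument --- the claim that conjugation by $\fa$ ``toggles this parity'' because the trace of an odd-trace element ``acquires a factor of $2$ from the $\fa^2=(2)$ scaling'' --- is not a proof and, as stated, cannot be made into one. Conjugation of an order by an ideal is not an element-wise operation that rescales traces: $\fa O\fa^{-1}$ is the lattice spanned by products $a\,x\,b$ with $a\in\fa$, $x\in O$, $b\in\fa^{-1}$, and the traces of its elements are not obtained from those of $O$ by multiplying by $2$. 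Worse, your mechanism is internally inconsistent: since $\fa^{-1}=\tfrac12\fa$, conjugation by $\fa$ and by $\fa^{-1}$ are the \emph{same} operation on orders, so a uniform ``traces get multiplied by $2$'' rule could never simultaneously destroy odd traces (forward direction, $O\to O'$) and create them (converse direction, $O'\to O$), yet the lemma requires both.

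What is actually needed --- and what the paper supplies --- is an explicit element computation. Writing $\fa=(2,1+\gamma)$, $\fa^{-1}=(1,\tfrac{1-\gamma}{2})$ and expanding $\gamma$ in the Ibukiyama basis as $\gamma=y\beta+z\tfrac{r+\alpha}{q}\beta$ with $y,r$ even and $z$ odd, one exhibits $\beta$ and $\tfrac{1+\alpha}{2}$ as explicit products lying in $\fa O(p,q,r,m)\fa^{-1}$, which forces that order to be an $O'(p,q,r',m')$ (and Lemma \ref{lem:qchoice} then pins down that $q$ is unchanged, as you anticipated). For the converse one exhibits a specific element of $\fa O'(p,q,r',m')\fa^{-1}$ that is perpendicular to $\alpha$ and has odd trace, which forces type $O$. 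Your proposal names the right target but omits the computation that establishes it; as written there is a genuine gap at exactly the point where the lemma has content. Your closing concern about optimality of the transported embedding is reasonable but secondary; the index computation you sketch would resolve it, and the paper treats it as immediate.
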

\begin{proof}
Let $\cO = \bZ[\gamma = \sqrt{q_2}]$.
It suffices to show that $\fa O(p,q,r,m) \fa^{-1}$ contains both $\frac{1+\alpha}{2}$ and $\beta$.

We note that $\fa = (2,1+\gamma)$ and $\fa^{-1} = (1,\frac{1-\gamma}{2})$. It follows immediately that $\beta \in \fa O(p,q_1,r,m) \fa^{-1}$.

Now we may write $\gamma = y\beta + z\frac{r+\alpha}{q}\beta$ with $y$ and $r$ even and $z$ odd.
Now, by observing that we may write $ \left(\frac{1+\alpha}{2}\right)$ as:
\[ (1+\gamma)\left(\tfrac{1}{2}(-zm+ry+1) + (zm+ry)\left(\frac{1+\beta}{2}\right) - \tfrac{1}{2}(yq+zr)\left(\frac{r+\alpha}{q}\beta\right)\right)\left(\frac{1-\gamma}{2}\right) \]
and that this quantity is an element of $\fa O(p,q,r,m) \fa^{-1}$
we conclude by Lemma \ref{lem:qchoice} that 
\[ \fa O(p,q,r,m) \fa^{-1} \simeq O'(p,q,r',m'). \]

Now suppose we start with $\cO$ optimal in $O'(p,q,r',m')$.
Attempting to reverse the above calculation cannot work in general as we no longer have $r$ and $m$ but $r'$ and $m'$. However, we observe that:
 \[\left((1+\gamma)\left(\frac{1+\alpha}{2}\right)\left(\frac{1-\gamma}{2}\right) - \left(\frac{1+\gamma^2}{4}\right)\alpha\right) \in \fa O'(p,q,r',m') \fa^{-1}\]
is perpendicular to $\alpha$ and has odd trace.
Hence, $\fa O'(p,q,r',m') \fa^{-1} \simeq O(p,\tilde{q},\tilde{r},\tilde{m})$. The result now follows.
\end{proof}

\begin{rmk}
Note, that we could not simply run the first part of the above argument in the opposite direction to go from $O'(p,q,r',m')$ to $O(p,q,r,m)$, in particular this would be impossible in any case where the class groups which classify $O'(p,q,r',m')$ and $O(p,q,r,m)$ are not in bijection. 
\end{rmk}

\begin{thm}\label{thm:7freq}
Fix $p =3\pmod 4$.
Fix $K=\bQ(\sqrt{-D})$ of discriminant $-D$. Fix an order $\cO = \bZ + \ff\cO_K$ and suppose that $\left(\frac{-D\ff^2}{p}\right) = -1$.
Suppose further that $2$ is tamely ramified in $K$ but $2$ does not divide $\ff$.

It we consider the set of supersingular values of $\bF_p$ except $1728$, each $j$-invariant $J$ has a partner $\tilde{J}$ such that, the frequency of the appearance of $X-J$ and $X-\tilde{J}$ as the reduction of irreducible linear factors of $P_{\cO}(X)$ modulo $p$ is the same.
\end{thm}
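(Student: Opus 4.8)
The plan is to realize the pairing via the $2$-isogeny attached to the ideal $\fa \triangleleft \cO$ with $\fa^2 = (2)$, which exists because $2$ is ramified in $K$ and $2\nmid\ff$; as $(2)$ is principal, $[\fa]\in C\ell(\cO)[2]$. First I would recall that the $\bZ_p$-roots of $P_\cO(X)$ — i.e.\ the linear factors of $P_\cO(X)$ over $\bZ_p$ — correspond, under the bijection between the roots of $P_\cO$ and $C\ell(\cO)$, to the fixed set $S$ of $\Frob_p$; by the dihedral count recalled earlier, $S$ is empty (in which case the theorem is vacuous) or a coset of $C\ell(\cO)[2]$. The class-group action on CM curves restricts to a simply transitive action of $C\ell(\cO)[2]$ on $S$, so $\iota := [\fa]\ast(-)$ is an involution of $S$: for $E/\bZ_p$ with CM by $\cO$ the curve $\fa\ast E$ is again over $\bZ_p$ with CM by $\cO$ (cf.\ Lemma~\ref{lem:3mod4cong}), and $\iota\circ\iota = [\fa^2]\ast(-) = [(2)]\ast(-) = \mathrm{id}_S$. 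Writing $A_J := \{\fa\in S : \overline{j}(\fa) = J\}$ for a supersingular $J\in\bF_p$, the frequency of $X-J$ among the reductions of linear factors of $P_\cO(X)$ is exactly $|A_J|$.

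Next I would show only two reductions can occur. Writing $\cO = \bZ[\gamma]$ with $\gamma^2 = -q_2$, Lemma~\ref{lem:frob} shows that for $E/\bZ_p$ with CM by $\cO$ the element $\widetilde{\Frob_p}\in\End(\overline E)$ has norm $p$, square $-p$, and lies in $\cO^\perp$; taking $\alpha = \pm\widetilde{\Frob_p}$ puts $\cO$ inside $\alpha^\perp$, so by Lemma~\ref{lem:qchoice} (via Ibukiyama's classification) $\End(\overline E)$ is isomorphic to $O(p,q_2,r,m)$ or to $O'(p,q_2,r',m')$ — in either case with the single parameter $q_2$ determined by $\cO$. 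Hence $\overline{j}(E)$ is one of at most two values, $J_O$ (endomorphism ring $\simeq O(p,q_2,\cdot,\cdot)$) and $J_{O'}$ (endomorphism ring $\simeq O'(p,q_2,\cdot,\cdot)$); by the theorems of Ibukiyama above $J_O\neq J_{O'}$ unless one of them is $1728$, the value excluded in the statement. So $S = A_{J_O}\sqcup A_{J_{O'}}$.

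Finally I would use Lemma~\ref{lem:3mod4cong} to see that $\iota$ interchanges $A_{J_O}$ and $A_{J_{O'}}$. If $\overline{j}(E) = J_O$, the lemma gives $\End(\overline{\fa\ast E})\simeq O'(p,q_2,\cdot,\cdot)$ with the \emph{same} $q_2$, so $\fa\ast E\in A_{J_{O'}}$, i.e.\ $\iota(A_{J_O})\subseteq A_{J_{O'}}$. If instead $\overline{j}(E) = J_{O'}$, the lemma gives $\End(\overline{\fa\ast E})\simeq O(p,\tilde q,\cdot,\cdot)$ for some $\tilde q$; but $\fa\ast E$ still has CM by $\cO$, so by the previous step its endomorphism ring is $\simeq O(p,q_2,\cdot,\cdot)$ and $\fa\ast E\in A_{J_O}$. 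Thus $\iota$ is an involution of $S = A_{J_O}\sqcup A_{J_{O'}}$ carrying each summand into the other, hence restricts to mutually inverse bijections $A_{J_O}\leftrightarrow A_{J_{O'}}$, so $|A_{J_O}| = |A_{J_{O'}}|$. Pairing $J_O$ with $J_{O'}$ (and every other supersingular value, which has frequency $0$, with itself) gives the theorem. The degenerate possibility $[\fa] = 1$ is covered too: then $\iota = \mathrm{id}_S$ forces $A_{J_O} = A_{J_{O'}} = \emptyset$, so only $1728$ can occur and there is nothing to prove.

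The step I expect to be the genuine obstacle is the middle one: showing that every $\bZ_p$-curve with CM by $\cO$ has reduction with Ibukiyama parameter exactly $q_2$, so that the "same $q$" in Lemma~\ref{lem:3mod4cong} identifies a \emph{single} partner $J_{O'}$ of $J_O$ rather than a union of supersingular values. This forces one to combine Lemma~\ref{lem:frob} (to get $\cO\subset\alpha^\perp$) with Lemma~\ref{lem:qchoice} and the representability criterion in Ibukiyama's classification; it is also where the hypothesis $p\equiv 7\pmod 8$ (so that $h_p = \tilde h_p$ and the $O$- and $O'$-types really are in bijection) and the exclusion of $j = 1728$ come in.
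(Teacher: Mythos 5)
Your proposal is correct and follows the paper's own strategy: the pairing is realized by acting with the $2$-torsion class $[\fa]$, $\fa^2=(2)$, on $\bZ_p$-curves with CM by $\cO$, with Lemma~\ref{lem:3mod4cong} supplying the $q$-preserving exchange of $O$- and $O'$-types. The only difference is bookkeeping at the end: where the paper closes the count by noting that for $p\equiv 7\pmod 8$ the injective map from classes of $O(p,q,r,m)$ to classes of $O'(p,q,r',m')$ is a bijection because the two collections have equal size, you instead combine Lemma~\ref{lem:frob} with Lemma~\ref{lem:qchoice} to pin the Ibukiyama parameter to the single value $q_2$ determined by $\cO$ and argue on the resulting two-element fibre --- a legitimate and slightly more self-contained way to finish the same argument.
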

\begin{proof}
We first observe that if $E$ is defined over $\bZ_p$ then so too is $\fa\ast E$. This follows by observing that the collection of endomorphisms in $\fa$ is Galois stable.
Moreover, in the case $p = 3\pmod 4$ the map from $O(p,q,r,m)$ to $O'(p,q,r',m')$ being injective implies it is bijective as the collections have the same size.

By Lemma \ref{lem:3mod4cong} it now follows that $O(p,q,r,m)$ and $O'(p,q,r',m')$ must occur with the same frequency.

We note that $j$-invariant $1728$ is the only one that can ever be identified with itself through this process, and in fact it must, because the class group has odd order.
\end{proof}

\begin{rmk}
For $p=3\pmod 4$ we obtain other less obvious relationships between the counts for maximal orders of type $O'$ and of type $O$ arising from the fact that the map is generically $3:1$. In particular, in general the frequency for those of type $O'$ is the sum of the frequencies of a specific collection of three of orders of type $O$.
We note that there will be a curve which is $2$-isogenous to the one with $j$-invariant $1728$.

We should point out that the $\bF_p$ points of the $2$-torsion is well understood, that there is a unique $\bF_p$ rational $2$-torsion point is suggestive of the above results, but does not show that the association is between $O(p,q,r,m)$ and $O'(p,q,r',m')$ and certainly not that it `respects $q$'.
\end{rmk}

\begin{thm}\label{thm:1freq}
Fix $p=1\pmod 4$.
Fix $K=\bQ(\sqrt{-D})$ of discriminant $-D$. Fix an order $\cO = \bZ + \ff\cO_K$ and suppose that $\left(\frac{-D\ff^2}{p}\right) = -1$.
Suppose further that $2$ is wildly ramified in $K$ but $2$ does not divide $\ff$.

It we consider the set of supersingular values of $\bF_p$, each $j$-invariant $J$ has a partner $\tilde{J}$ such that, the frequency of the appearance of $X-J$ and $X-\tilde{J}$ as the reduction of irreducible linear factors of $P_{\cO}(X)$ modulo $p$ is the same.

This partner $\tilde{J}$ is independent of $K$ and $\cO$ and depends only on $p$.
\end{thm}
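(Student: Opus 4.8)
The plan is to mimic the structure of the proof of Theorem \ref{thm:7freq}, but with the roles of $O$ and $O'$ played by a different pair of order-types adapted to the case $p = 1 \pmod 4$ and $2$ wildly ramified. Since $p = 1 \pmod 4$ there are no orders of type $O'(p,q,r',m')$ at all (the construction $O'$ requires $p = 3 \pmod 4$), so the relevant systematic isogeny must act within the single family $O(p,q,r,m)$ and must permute its isomorphism classes with order exactly $2$. The natural candidate is again a $2$-isogeny: we have $2 | D$ with $2$ wildly ramified, so in $\cO = \bZ + \ff\cO_K$ the prime $2$ ramifies, say $(2) = \fa^2$ with $\fa$ a non-principal (or possibly principal) ideal of $\cO$, and the operation $E \mapsto \fa * E$ descends to the supersingular reductions. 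First I would record, exactly as in the proof of Theorem \ref{thm:7freq}, that $\fa * E$ is again defined over $\bZ_p$ when $E$ is, because the collection of endomorphisms lying in $\fa$ is Galois-stable (here one uses Lemma \ref{lem:frob} to see that $\Frob_p$ acts on $\cO$ by $x \mapsto \overline x$, hence preserves $\fa = \overline{\fa}$).

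Next I would prove the analogue of Lemma \ref{lem:3mod4cong}: if $\cO$ is optimally embedded in $O(p,q,r,m)$ with $\cO \subset \alpha^\perp$, then $\fa\, O(p,q,r,m)\, \fa^{-1} \simeq O(p,\tilde q,\tilde r,\tilde m)$ is again a maximal order of the same type (automatic, since every maximal order containing $\bZ[\sqrt{-p}]$ is of type $O$ when $p = 1 \pmod 4$) with an optimal embedding of $\cO$, and moreover that the induced involution $q \mapsto \tilde q$ — equivalently $J \mapsto \tilde J$ on the supersingular $j$-invariants that are roots of $P_{\bZ[\sqrt{-p}]}(X)$ — depends only on $p$. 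The key computational input is Lemma \ref{lem:qchoice} together with the explicit generators of $\fa^{\pm 1}$ inside $\cO$; one writes a generator $\gamma$ of $\cO$ over $\bZ$ in the basis $y\beta + z\frac{(r+\alpha)\beta}{q}$ coming from fact (3) in the proof of Theorem \ref{thm:tworam}, conjugates $\frac{1+\alpha}{2}$ (or $\alpha$, in the even-trace situation forced by $p = 1 \pmod 4$ and the relevant ramification) by $\fa$, and checks the result still lies in the conjugated order; the association of $\tilde q$ to $q$ is then read off as in Lemma \ref{lem:qchoice} and, crucially, only the congruence class of $p$ enters the relations $q\tilde q = z^2 p + (\cdots)^2$, giving the $K$- and $\cO$-independence claimed.

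Having these two ingredients, the theorem follows formally: the map $E \mapsto \fa * E$ on elliptic curves over $\bZ_p$ with CM by $\cO$ is a bijection (it is induced by an invertible ideal class), it sends a curve whose reduction has endomorphism ring $O(p,q,r,m)$ to one whose reduction has endomorphism ring $O(p,\tilde q,\tilde r,\tilde m)$, and hence the linear factor $X - J$ of $P_\cO(X) \bmod p$ and the linear factor $X - \tilde J$ occur with the same multiplicity. That $\tilde J$ is a genuine involution pairing (and in particular that one should phrase it as ``$J$ has a partner $\tilde J$'') comes from $\fa^2 = (2)$ being principal in $\cO$, so $\fa * (\fa * E) \cong E$; unlike the $p = 7 \pmod 4$ case there is no exceptional fixed point forced by parity of the class number, and one checks directly whether $J = \tilde J$ can occur (it corresponds to $q = \tilde q$, i.e. to $\fa$ fixing the given optimal embedding up to isomorphism).

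I expect the main obstacle to be the analogue of Lemma \ref{lem:3mod4cong}, specifically verifying that the conjugate order $\fa\, O(p,q,r,m)\, \fa^{-1}$ contains the precise generators needed to be recognized via Lemma \ref{lem:qchoice}, and pinning down the sign/normalization conventions ($8 \mid r$, $r'$ odd, the distinction between the $\frac{1+\alpha}{2}$-type and $\alpha$-type generators) so that the resulting pairing $q \mapsto \tilde q$ is unambiguous and manifestly depends only on $p \bmod 4$. The wildly-ramified-at-$2$ hypothesis must be used to guarantee $\fa$ exists with the right properties and that the relevant primitive perpendicular elements have the parity needed for the computation to close; isolating exactly where that hypothesis is essential (as opposed to $2$ tamely ramified, which by Theorem \ref{thm:tworam} forces ``no linear terms'' here) is the delicate point.
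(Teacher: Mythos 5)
Your overall strategy is the paper's: take $\fa$ with $\fa^2=(2)$ in $\cO$, note that $\fa\ast E$ stays over $\bZ_p$ by Galois-stability of $\fa$, and compare the endomorphism rings of $E$ and $\fa\ast E$ by computing $\fa\, O(p,q,r,m)\,\fa^{-1}$. The gap is in the one step that is genuinely new relative to Theorem \ref{thm:7freq}, namely the last sentence of the statement: that the partner $\tilde J$ depends only on $p$. You propose to ``read off'' $\tilde q$ from Lemma \ref{lem:qchoice} and assert that the independence follows because ``only the congruence class of $p$ enters the relations $q\tilde q = z^2p+(\cdots)^2$.'' That relation visibly involves $q$ and the parameters $y,z,r$ of the chosen optimal embedding of $\cO$, so it cannot by itself show that two different orders $\cO_1,\cO_2$ (or two different embeddings of the same $\cO$) conjugate a fixed $O(p,q,r,m)$ to isomorphic maximal orders; a priori the resulting $\tilde q$ could vary with $\cO$. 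Lemma \ref{lem:qchoice} only certifies an isomorphism type once you have exhibited a specific element of $\alpha^\perp$ of square $-\tilde q$ in the conjugated order, and your sketch never produces one whose class is manifestly $\cO$-independent.

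The paper closes this gap with a different and essentially elementary observation: writing $\gamma=y\beta+z\frac{(r+\alpha)\beta}{q}$ with $y,z$ odd and $r$ even, one checks directly that $\fa\,O(p,q,r,m)=\fb\,O(p,q,r,m)$ as left ideals, where $\fb=(2,1+\alpha)$ is the ideal with $\fb^2=(2)$ in $\bZ[\sqrt{-p}]$ (one containment uses $\gamma\beta=-(yq+zr+z\alpha)\equiv 1+\alpha \pmod{2}$ together with $2\in\fa$; the other uses that $q$ is odd). Hence $\fa\,O(p,q,r,m)\,\fa^{-1}=\fb\,O(p,q,r,m)\,\fb^{-1}$, and the right-hand side involves only $p$ and the fixed embedding $\sqrt{-p}\mapsto\alpha$, not $K$ or $\cO$. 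If you replace the appeal to Lemma \ref{lem:qchoice} by this identity, the rest of your argument (bijectivity of $E\mapsto\fa\ast E$ and the resulting equality of frequencies) goes through as you describe.
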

\begin{proof}
Set $\fa^2 = (2)$ in $\cO$. 
In this case we have $\fa = (2,\gamma)$ and $\fa^{-1} = (1,\tfrac{1}{2}\overline{\gamma})$.
As in the previous case, we must only show that $\fa O(p,q,r,m) \fa^{-1}$ is independent of $\cO$.

Now set $\fb^2 = (2)$ in $\bZ[\sqrt{-p}]$.
We have that $\fb = (2,1+\alpha)$.

We recall that we have $\gamma = y\beta + z\frac{r+\alpha}{q}\beta = \tfrac{1}{q}(yq +zr + z\alpha)\beta$ with $r$ even and both $y$ and $z$ odd.

We claim that $(1+\alpha) \in \fa O(p,q,r,m)$. Indeed, as $\beta\in O(p,q,r,m)$ we have $yq +zr + z\alpha = \gamma\beta \in \fa O(p,q,r,m)$. Since $2\in \fa$ the claim then follows immediately.
Conversely, it is clear that $q\gamma \in \fb O(p,q,r,m)$. As $q$ is odd, and $2 \in \fb$ we also have that $\gamma\in \fb O(p,q,r,m)$.
We thus have shown that $\fa O(p,q,r,m) = \fb O(p,q,r,m)$.

It follows that $\fa O(p,q,r,m) \fa^{-1} = \fb O(p,q,r,m) \fb^{-1}$ is independent of $\cO$.
\end{proof}

\begin{rmk}
In this case the uniqueness of the $\bF_p$-rational $2$-torsion points is sufficient to conclude the result.
\end{rmk}

\section{Further Questions}\label{sec:cq}

Our results suggest the following natural questions:

\begin{qu}
In Theorem \ref{thm:tworam} we gave necessary conditions for a datum $(\cO\subset\bB)$ to correspond to an elliptic curve over $\bZ_p$. Moreover, Proposition \ref{prop:hasroots} gives the impression that this may be sufficient.
It is natural to ask, if these conditions are in fact sufficient.

\begin{enumerate}
\item[(a)]
More precisely, given an elliptic curve over $\bF_p$, and an endomorphism (defined over some extension) when can we lift the curve to $\bZ_p$ such that the endomorphism lifts to some extension?
\item[(b)]
Is it sufficient that the endomorphism be perpendicular to Frobenious in the endomorphism algebra over $\overline{\bF_p}$? 
\end{enumerate}
\end{qu}
An answer to this question would shed light on the structure of the ring  $\bZ[j(E_1),\ldots,j(E_n)]$ as remarked in the introduction.

\begin{qu}
Theorems \ref{thm:7freq} and \ref{thm:1freq} give situations in which there are automatic relationships between certain roots of $P_\cO(X)$.
As remarked a similar result holds for the same reason when $p=3\pmod 8$.
\begin{enumerate}
\item[(a)]
It is natural to ask if there are other situations in such relationships must exist? In particular are there situations where the role of $2$ can be replaced by some other prime?
\item[(b)]
The method of proof also suggests that we could anticipate relations between the roots of $P_\cO(X)$ between two different orders in the same field whose conductors differ by a factor of $2$. Can the combinatorics of this be made more precise?
\end{enumerate}
\end{qu}

\section*{Acknowledgements}

 I would like to thank Prof. Eyal Goren for suggesting the computations which led to the discovery of these results.
 I would like to thank Prof. Ernst Kani for some useful discussions as well as recommending several references.
 I would like to thank the referee, for their numerous helpful suggestions.
I would like to thank the many developers of SAGE without which the computations through which we uncovered these results would not be possible.
I would also like to thank the SAGE Notebook project for the use of various computing resources which they have made available through their various funding sources.

{}\ifx\XMetaCompile\undefined

\providecommand{\MR}[1]{}
\providecommand{\bysame}{\leavevmode\hbox to3em{\hrulefill}\thinspace}
\providecommand{\MR}{\relax\ifhmode\unskip\space\fi MR }
\providecommand{\MRhref}[2]{  \href{http://www.ams.org/mathscinet-getitem?mr=#1}{#2}
}
\providecommand{\href}[2]{#2}

{}\fi

\appendix
\section{Data}\label{sec:ad}

In this section we will present a representative sample of the data, which forms the basis for how we discovered the theorems. Similar computations have been done for all $p$ up to $1000$. All of these computation were performed in SAGE. Data not contained can be obtained from the author.

In all the data which follows, the frequencies presented represent the total number of times each factor appears as the reduction modulo $p$ of an irreducible factor of $P_{\cO}(X)$ over $\bZ_p$ for one of the orders under consideration.
We will consider several families of orders, but in all cases we are considering all orders in the described class with class numbers strictly less than $40$ (and discriminant of the base field less than $10$ million, noting that there are no fields of class number less than $100$ with discriminant between $3$ and $10$ million).

Made precise, the appearance of the $199$ in the first table indicates that there are exactly $199$ different $j$-invariants congruent to $0$ mod $71$ for elliptic curves over $\bZ_{71}$ which admit CM by the maximal order of a quadratic imaginary field with odd class number less than $40$ (and discriminant less than $10$ million) in which $71$ is inert.
The $1$ in the first table indicates that there is a unique $j$-invariant over $\bZ_{71}$ congruent to $-23$ modulo $71$ for which the associated elliptic curve admits CM by the maximal order of a quadratic imaginary field with odd class number less than $40$ (and discriminant less than $10$ million) in which $71$ is inert.
We remark that this unique $j$-invariant is that of the curve which admits CM by $\bZ[\sqrt{-2}]$.

We should remark that ordering by class number is not ideal, in particular this appears to change the relative frequency of various congruence conditions. Heuristically this can be explained for example by noticing that if $2$ ramifies, this will tend to double the size of the class group, whereas if $2$ splits this will tend to make it much larger. That is the splitting and ramification of small primes tends to impact the size of the class group as it is precisely these primes which, by Minkowski theory, will be the generators.
Moreover, adding factors of $2$ to the conductor will typically double (or triple) the class number.
The effect is that in the data which follows you should not try to compare data between columns without adjusting for the bias caused by the class number cutoffs.

We should note, it is entirely possible that the skew the class number ordering creates in the data is the only reason we were able to originally identify any of the underlying phenomenon we have discussed. In particular, considering parity conditions on the class number is likely entirely unnatural.

\subsection{Data for $p=71$}
This data is typical for $p = 7\pmod8$, the choices of $5$ and $7$ are arbitrary but demonstrate contrasting behaviour.

\begin{center}
All Orders Inert at $p=71$ subject to conditions on discriminants/conductors.
\setlength\tabcolsep{1.5pt}.
{\tiny
\begin{tabular}{ | c | c |c | c |c | c |c | c |c | c |c | c |c |c | c |c | c |c | c |c | c |c | c |  }
\hline
		&	     All	& $2\not\vert D$	&	  $2\not\vert D$	&	  $2\not\vert D$	&	   	   $2\not\vert D$	&	    $2\not\vert D$& $4\vert\vert D$&	  $4\vert\vert D$&	  $4\vert\vert D$&	  	   $4\vert\vert D$	&	    $4\vert\vert D$	&	     $8\vert\vert D$&	  $8\vert\vert D$&	   $8\vert\vert D$	&	   $8\vert\vert D$	&	    $8\vert\vert D$	&	   $7|D\ff$    &   $7\not\vert D\ff$	\\
		&	     All	& $2\not\vert \ff$	&	  $2\vert\vert \ff$	&	 $4\vert\vert \ff$	&	   $8\vert\vert \ff$	&	   $16\vert \ff$	& $2\not\vert \ff$	&	  $2\vert\vert \ff$	&	  $4\vert\vert \ff$	&	  $8\vert\vert \ff$	&	   $16\vert \ff$		&	   $2\not\vert \ff$	&	$2\vert\vert \ff$	&	  $4\vert\vert \ff$	&	   $8\vert\vert \ff$	&	  $16\vert \ff$		&	      &   	\\
\hline
$	               x	         $&	  1109	&	  806	&	   -	&	   -	&	  18	&5	  	&	  158	&	    -	&	   17	&	     4	&3	     	&	   -	&	    73	&	    16	&	     5	&4	    	&	   -	&	 1109	\\
$	           x + 5	         $&	  1123	&	  817	&	   -	&	   -	&	  20	&5	  	&	  152	&	    -	&	   16	&	     6	&3	    	&	   -	&	    74	&	    22	&	     6	&2	     	&	   -	&	 1123	\\
$	          x + 23	         $&	   941	&	    -	&	 173	&	  82	&	  14	&5	  	&	  152	&	   75	&	   17	&	     4	&2	     	&	 314	&	    73	&	    21	&	     6	&3	     	&	   -	&	  941	\\
$	          x + 30         	$&	  1126	&	  811	&	   -	&	   -	&	  22	&8	 	&	  161	&	    -	&	   11	&	     6	&3	     	&	   -	&	    74	&	    23	&	     4	&3	     	&	   -	&	 1126	\\
$	          x + 31	         $&	   967	&	    -	&	 176	&	  94	&	  26	&6	   	&	  158	&	   77	&	   11	&	     6	&3	     	&	 303	&	    75	&	    23	&	     6	&3	     	&	   -	&	  967	\\
$	          x + 47	         $&	  1027	&	  408	&	  86	&	  48	&	  14	&6	   	&	  143	&	   39	&	   17	&	     6	&4	   	&	 155	&	    74	&	    22	&	     3	&2	     	&	   -	&	 1027	\\
$	          x + 54	         $&	   934	&	    -	&	 169	&	  86	&	  18	&5	   	&	  161	&	   66	&	   17	&	     6	&4	     	&	 301	&	    79	&	    15	&	     4	&3	     	&	   -	&	  934	\\
$	             x^2	         $&	  2981	&	 1572	&	 432	&	 101	&	  19	&3	   	&	  298	&	   90	&	   12	&	     4	&4	     	&	 378	&	    47	&	    19	&	     -	&2	     	&	 467	&	 2514	\\
$	       (x + 5)^2	$&	 10258	&	 5675	&	1293	&	 310	&	  70	&11	   	&	 1078	&	  267	&	   55	&	    16	&16	   	&	1164	&	   207	&	    72	&	     8	&16	   	&	1447	&	 8811	\\
$	      (x + 23)^2	$&	 10375	&	 6106	&	1194	&	 278	&	  65	&9	   	&	 1086	&	  229	&	   60	&	    15	&14	   	&	1009	&	   219	&	    63	&	    11	&17	   	&	1427	&	 8948	\\
$	      (x + 30)^2	$&	 10214	&	 5661	&	1292	&	 304	&	  56	&11	   	&	 1068	&	  271	&	   60	&	    15	&12	    	&	1159	&	   208	&	    69	&	    14	&14	  	&	1418	&	 8796	\\
$	      (x + 31)^2	$&	 10283	&	 6052	&	1213	&	 251	&	  62	&16	  	&	 1062	&	  236	&	   61	&	    20	&12	    	&	1001	&	   207	&	    73	&	     7	&10	    	&	1414	&	 8869	\\
$	      (x + 47)^2	$&	  4833	          &	 2787	&	 598	&	 134	&	  28	&4	 	&	  499	&	  118	&	   26	&	    11	&6	   	&	 509	&	    78	&	    28	&	     -	&7	    	&	 721	&	 4112	\\
$	      (x + 54)^2	$&	 10255	&	 6042	&	1187	&	 258	&	  56	&14	  	&	 1065	&	  235	&	   54	&	    14	&17	    	&	1001	&	   218	&	    71	&	    11	&12	     	&	1450	&	 8805	\\
\hline
\end{tabular}
}
\end{center}

\subsection{Data for $p=59$}
This data is typical for $p = 3\pmod8$.
\begin{center}
All Orders Inert at $p=59$ subject to conditions on discriminants/conductors.

\setlength\tabcolsep{2.5pt}.
{\tiny
\begin{tabular}{ | c | c |c | c |c | c |c | c |c | c |c | c |c |c | c |c | c |c | c |c | c |c | c |  }
\hline
		&	     All	& $2\not\vert D$	&	  $2\not\vert D$	&	  $2\not\vert D$	&	   	   $2\not\vert D$	&	    $2\not\vert D$& $4\vert\vert D$&	  $4\vert\vert D$&	  $4\vert\vert D$&	  	   $4\vert\vert D$	&	    $4\vert\vert D$	&	     $8\vert\vert D$&	  $8\vert\vert D$&	   $8\vert\vert D$	&	   $8\vert\vert D$	&	    $8\vert\vert D$		\\
		&	     All	& $2\not\vert \ff$	&	  $2\vert\vert \ff$	&	 $4\vert\vert \ff$	&	   $8\vert\vert \ff$	&	   $16\vert \ff$	& $2\not\vert \ff$	&	  $2\vert\vert \ff$	&	  $4\vert\vert \ff$	&	  $8\vert\vert \ff$	&	   $16\vert \ff$		&	   $2\not\vert \ff$	&	$2\vert\vert \ff$	&	  $4\vert\vert \ff$	&	   $8\vert\vert \ff$	&	  $16\vert \ff$	 	\\
\hline
$	               x	$&	  1245	&	  896	&	   -	&	  92	&	   -	&	   -	&	  172	&	   85	&	    -	&	     -	&	     -	&	   -	&	     -	&	     -	&	-	&	     -\\
$	          x + 11	$&	  1241	&	  890	&	   -	&	  98	&	   -	&	   -	&	  173	&	   80	&	    -	&	     -	&	     -	&	   -	&	     -	&	     -	&	-	&	     -\\
$	          x + 12	$&	  1236	&	  890	&	   -	&	  97	&	   -	&	   -	&	  167	&	   82	&	    -	&	     -	&	     -	&	   -	&	     -	&	     -	&	-	&	     -\\
$	          x + 31	$&	  1224	&	  870	&	   -	&	  91	&	   -	&	   -	&	  172	&	   91	&	    -	&	     -	&	     -	&	   -	&	     -	&	     -	&	-	&	     -\\
$	          x + 42	$&	  1146	&	  440	&	 285	&	  40	&	   -	&	   -	&	  336	&	   45	&	    -	&	     -	&	     -	&	   -	&	     -	&	     -	&	-	&	     -\\
$	          x + 44	$&	  1060	&	    -	&	 549	&	   -	&	   -	&	   -	&	  511	&	    -	&	    -	&	     -	&	     -	&	   -	&	     -	&	     -	&	-	&	     -\\
$	      (x + 11)^2	$&	 12375	&	 6855	&	1574	&	 325	&	 103	&	29	&	 1269	&	  299	&	   85	&	    28	&	16	&	1389	&	   297	&	    92	&	    12	&	     2\\
$	      (x + 12)^2	$&	 12241	&	 6818	&	1537	&	 319	&	  98	&	27	&	 1229	&	  293	&	   84	&	    35	&	18	&	1371	&	   306	&	    93	&	    10	&	     3\\
$	      (x + 31)^2	$&	 12274	&	 6844	&	1544	&	 324	&	  90	&	27	&	 1250	&	  282	&	   83	&	    43	&	14	&	1381	&	   292	&	    83	&	    12	&	     5\\
$	      (x + 42)^2	$&	  5910	&	 3429	&	 632	&	 160	&	  50	&	13	&	  511	&	  144	&	   39	&	    18	&	8	&	 701	&	   145	&	    52	&	     6	&	     2\\
$	      (x + 44)^2	$&	 12360	&	 7250	&	1264	&	 381	&	 114	&	29	&	 1066	&	  329	&	   80	&	    32	&	14	&	1399	&	   300	&	    87	&	    12	&	     3\\
$	             x^2	$&	  3692	&	 1983	&	 512	&	  77	&	  31	&	7	&	  352	&	   72	&	   26	&	    14	&	6	&	 474	&	   100	&	    33	&	     4	&	     1\\
\hline
\end{tabular}
}
\end{center}

\subsection{Data for $p=41$}
This data is typical for $p = 1\pmod4$.
\begin{center}
All Orders Inert at $p=41$ subject to conditions on discriminants/conductors.
\setlength\tabcolsep{2.5pt}.
{\tiny
\begin{tabular}{ | c | c |c | c |c | c |c | c |c | c |c | c |c |c | c |c | c |c | c |c | c |c | c |  }
\hline
		&	     All	& $2\not\vert D$	&	  $2\not\vert D$	&	  $2\not\vert D$	&	   	   $2\not\vert D$	&	    $2\not\vert D$& $4\vert\vert D$&	  $4\vert\vert D$&	  $4\vert\vert D$&	  	   $4\vert\vert D$	&	    $4\vert\vert D$	&	     $8\vert\vert D$&	  $8\vert\vert D$&	   $8\vert\vert D$	&	   $8\vert\vert D$	&	    $8\vert\vert D$		\\
		&	     All	& $2\not\vert \ff$	&	  $2\vert\vert \ff$	&	 $4\vert\vert \ff$	&	   $8\vert\vert \ff$	&	   $16\vert \ff$	& $2\not\vert \ff$	&	  $2\vert\vert \ff$	&	  $4\vert\vert \ff$	&	  $8\vert\vert \ff$	&	   $16\vert \ff$		&	   $2\not\vert \ff$	&	$2\vert\vert \ff$	&	  $4\vert\vert \ff$	&	   $8\vert\vert \ff$	&	  $16\vert \ff$	 	\\
\hline
$	               x	$&	  1488	&	 1055	&	 222	&	   -	&	   -	&	   -	&	    -	&	    -	&	    -	&	     -	&	     -	&	 211	&	     -	&	     -	&	     -	&	     -	\\
$	           x + 9	$&	  1495	&	 1068	&	 220	&	   -	&	   -	&	   -	&	    -	&	    -	&	    -	&	     -	&	     -	&	 207	&	     -	&	     -	&	     -	&	     -	\\
$	          x + 13	$&	  1491	&	 1055	&	 229	&	   -	&	   -	&	   -	&	    -	&	    -	&	    -	&	     -	&	     -	&	 207	&	     -	&	     -	&	     -	&	     -	\\
$	          x + 38	$&	  1499	&	 1065	&	 223	&	   -	&	   -	&	   -	&	    -	&	    -	&	    -	&	     -	&	     -	&	 211	&	     -	&	     -	&	     -	&	     -	\\
$	             x^2	$&	  5583	&	 3036	&	 665	&	184	&	  46	&	13	&	  675	&	  146	&	   37	&	     8	&	     2	&	 560	&	   146	&	    45	&	    13	&	     7	\\
$	       (x + 9)^2	$&	 18184	&	10102	&	2215	&	557	&	 143	&	48	&	 2014	&	  454	&	  117	&	    37	&	     3	&	1877	&	   434	&	   135	&	    33	&	    15	\\
$	      (x + 13)^2	$&	 18218	&	10107	&	2199	&	582	&	 133	&	52	&	 2001	&	  444	&	  123	&	    37	&	     3	&	1906	&	   443	&	   132	&	    43	&	    13	\\
$	      (x + 38)^2	$&	 18173	&	10080	&	2205	&	583	&	 138	&	47	&	 2015	&	  432	&	  131	&	    38	&	     8	&	1871	&	   437	&	   128	&	    47	&	    13	\\

\hline
\end{tabular}
}
\end{center}

\subsection{Key Observations About Data}

\begin{itemize}
\item In all of the data sets the frequency with which the roots appear appears to be equidistributed subject only to rescaling those $j$ invariants for which the curves have automorphisms.

\item The linear terms do not follow the same distribution as the underlying roots.

\item It is not immediately clear if we restrict to maximal orders if the linear terms are equidistributed overall, however each family based on ramification at $2$ appears to be, and if we reweigh (to correct bias caused by class number bounds) and regroup it is possible that the result is equidistribution.

\item Specifying ramification conditions can have an effect on the presence or absence of linear factors.

\item All families where we account for the behaviour at $2$ in the discriminant and conductor appears to satisfy a simple distribution.
\begin{itemize}
\item The frequency of $j$-invariants with automorphisms may or may not be effected depending on the case.

More specifically $j=0$ is never apparently rescaled, whereas $j=1728$ may or may not be depending on discriminant and conductor.
\item Some $j$-invariants may be favoured despite no extra automorphisms.

For example, $j=-44$ for $p=59$ when $4||D$ and $2$ does not divide $\ff$.

\item For $p=3\pmod4$ there is a partitioning of $j$-invariants into two sets (with $j=1728$ the common intersection) where the distribution selects for one set or the other based on the conditions on discriminants and conductors.

\item For $p=7\pmod 8$, $4||D$ and $2\not\vert\ff$ there is an apparent bijection between these two sets (excluding $1728$) where the frequencies will be identical between the two sets.

Note: Within the data this actually happens on the level of individual orders.

\item For $p=1\pmod 4$, $8||D$ and $2\not\vert\ff$ there is an apparent bijection between two sets where the frequencies will be identical between the two sets.

Note: Within the data this actually happens on the level of individual orders.
\end{itemize}

\item Based on heuristic reasoning on the effect on class number of changing conductors by $2$ and the apparent patterns and equidistribution in families one can reasonably expect equidistribution of the linear terms in the limit if we consider all conductors in a given field.

That is, we know the relative sizes of the exceptional sets and the effect on class numbers of increasing conductors by factors of $2$.

\end{itemize}

{}\ifx\XMetaCompile\undefined


\begin{thebibliography}{{Mor}14}

\bibitem[BM04]{BrillMorClassNumbers}
John Brillhart and Patrick Morton, \emph{Class numbers of quadratic fields,
  hasse invariants of elliptic curves, and the supersingular polynomial},
  Journal of Number Theory \textbf{106} (2004), no.~1, 79 -- 111.

\bibitem[CV05]{CornutVatsal1}
C.~Cornut and V.~Vatsal, \emph{C{M} points and quaternion algebras}, Doc. Math.
  \textbf{10} (2005), 263--309. \MR{2148077 (2006c:11069)}

\bibitem[CV07]{CornutVatsal2}
Christophe Cornut and Vinayak Vatsal, \emph{Nontriviality of {R}ankin-{S}elberg
  {$L$}-functions and {CM} points}, {$L$}-functions and {G}alois
  representations, London Math. Soc. Lecture Note Ser., vol. 320, Cambridge
  Univ. Press, Cambridge, 2007, pp.~121--186. \MR{2392354 (2009m:11088)}

\bibitem[Deu41]{Deuring}
Max Deuring, \emph{Die {T}ypen der {M}ultiplikatorenringe elliptischer
  {F}unktionenk\"orper}, Abh. Math. Sem. Univ. Hamburg \textbf{14} (1941),
  no.~1, 197--272. \MR{3069722}

\bibitem[Dor89]{Dorman1989global}
David~R Dorman, \emph{Global orders in definite quaternion algebras as
  endomorphism rings for reduced cm elliptic curves}, Th{\'e}orie des nombres
  (Quebec, PQ, 1987) (1989), 108--116.

\bibitem[Elk87]{Elkies1987}
Noam~D. Elkies, \emph{The existence of infinitely many supersingular primes for
  every elliptic curve over ℚ}, Inventiones mathematicae \textbf{89} (1987),
  no.~3, 561--567.

\bibitem[Ibu82]{Ibukiyama}
Tomoyoshi Ibukiyama, \emph{On maximal orders of division quaternion algebras
  over the rational number field with certain optimal embeddings}, Nagoya Math.
  J. \textbf{88} (1982), 181--195. \MR{683249 (85c:11112)}

\bibitem[JK11]{JetchevKane}
Dimitar Jetchev and Ben Kane, \emph{Equidistribution of {H}eegner points and
  ternary quadratic forms}, Math. Ann. \textbf{350} (2011), no.~3, 501--532.
  \MR{2805634 (2012k:11081)}

\bibitem[Kan89]{kaneko1989}
Masanobu Kaneko, \emph{Supersingular $j$-invariants as singular moduli ${\rm
  mod}\, p$}, Osaka J. Math. \textbf{26} (1989), no.~4, 849--855.

\bibitem[LV15]{lauter2015arithmetic}
Kristin Lauter and Bianca Viray, \emph{An arithmetic intersection formula for
  denominators of igusa class polynomials}, American Journal of Mathematics
  \textbf{137} (2015), no.~2, 497--533.

\bibitem[{Mor}14]{MortonGenus}
P.~{Morton}, \emph{{Genus theory and the factorization of class equations over
  $\mathbb{F}\_p$}}, ArXiv e-prints (2014).

\bibitem[Sch10]{SchertzCM}
Reinhard Schertz, \emph{Complex multiplication}, New Mathematical Monographs,
  vol.~15, Cambridge University Press, Cambridge, 2010. \MR{2641876
  (2011i:11090)}

\bibitem[{Sta}12]{StankewiczTwistsofShimura}
J.~{Stankewicz}, \emph{{Twists of Shimura Curves}}, ArXiv e-prints (2012).

\end{thebibliography}
\end{document}
{}\fi